\setlist[enumerate,1]{label={(\alph*)}}
\setlist[enumerate,2]{label={(\roman*)}}
\newtheorem{thm}{Theorem}[section]
\newtheorem{prop}[thm]{Proposition}
\newtheorem{lem}[thm]{Lemma}
\newtheorem{claim}[thm]{Claim}
\newtheorem{observation}[thm]{Observation}
\newtheorem{cor}[thm]{Corollary}
\newtheorem{clm}[thm]{Claim}
\theoremstyle{definition}
\newtheorem{definition}[thm]{Definition}
\newtheorem*{notation*}{Notation}
\theoremstyle{remark}
\newtheorem{rmk}[thm]{Remark}
\newtheorem{algorithm}[thm]{Algorithm}
\newcommand{\ignore}[1]{}
\newcommand{\R}{\mathbb R}
\newcommand{\N}{\mathbb N}
\newcommand{\mA}{\mathcal A}
\newcommand{\mB}{\mathcal B}
\newcommand{\E}{{\mathbb{E}}}
\newcommand{\oone}{{o \left(1\right)}}
\newcommand{\omegaone}{{\omega \left(1\right)}}
\newcommand{\eps}{{\varepsilon}}
\newcommand{\termdefine}[1]{\textbf{#1}}
\newcommand{\thmref}[1]{{Theorem \ref{#1}}}
\newcommand{\lemref}[1]{{Lemma \ref{#1}}}
\newcommand{\claref}[1]{{Claim \ref{#1}}}
\newcommand{\Prob}{{\mathbb{P}}}
\newcommand{\prob}{{\Prob}}
\newcommand{\rt}{\right}
\newcommand{\lt}{\left}
\newcommand{\gnp}{{G\left(n;p\right)}}
\newcommand{\tg}{{\tilde{G}}}
\newcommand{\tgone}{{\tilde{G_1}}}
\newcommand{\tGH}{{\tg_H}}
\newcommand{\tS}{{\tilde{S}}}
\newcommand{\tT}{{\tilde{T}}}
\newcommand{\tDelta}{{\tilde{\Delta}}}
\newcommand{\dcup}{{\mathbin{\dot{\cup}}}}
\newcommand{\ldc}{{\frac{1}{1000}}}
\newcommand{\econst}{{\frac{1}{2000}}}
\newcommand{\egconst}{{\frac{1}{3000}}}
\DeclareMathOperator{\deg1}{deg}
\DeclareMathOperator{\var}{Var}
\begin{document}
\title{Perfect Matchings in Random Subgraphs of Regular Bipartite Graphs}
\author{Roman Glebov}\thanks{Roman Glebov was supported by ERC grant 678765 and ISF grant 1452/15.}
\address{Hebrew University of Jerusalem, Jerusalem 91904, Israel}
\email{roman.l.glebov@gmail.com}
\author{Zur Luria}
\address{Software Department, Jerusalem College of Engineering}
\email{zluria@gmail.com}
\author{Michael Simkin}
\address{Institute of Mathematics and Federmann Center for the Study of Rationality, The Hebrew University of Jerusalem, Jerusalem 91904, Israel}
\email{menahem.simkin@mail.huji.ac.il}

\begin{abstract}

Consider the random process in which the edges of a graph $G$ are added one by one in a random order. A classical result states that if $G$ is the complete graph $K_{2n}$ or the complete bipartite graph $K_{n,n}$, then typically a perfect matching appears at the moment at which the last isolated vertex disappears. We extend this result to arbitrary $k$-regular bipartite graphs $G$ on $2n$ vertices for all $k = \omega \left( \frac{n}{\log^{1/3} n} \right)$. 

Surprisingly, this is not the case for smaller values of $k$. Using a construction due to Goel, Kapralov and Khanna, we show that there exist bipartite $k$-regular graphs in which the last isolated vertex disappears long before a perfect matching appears.

\end{abstract}

\maketitle

\section{Introduction}\label{sec:intro}

The study of the random graph model $\gnp$ began with two influential papers by Erd\H{o}s and R\'enyi \cite{erdds1959random, erds1960evolution}. In \cite{erdds1959random} and \cite{erdos1964random}, they considered the range $p = \Theta(\log n /n)$ and the appearance of spanning structures in that regime. Later, several papers \cite{ajtai1981longest, bollobas1984evolution, komlos1973hamilton, komlos1983limit, korshunov1976solution, posa1976hamiltonian} led to the following understanding. Consider a random graph process on $n$ vertices, in which edges are added one by one in a random order. Asymptotically almost surely\footnote{An event occurs ``asymptotically almost surely'' (a.a.s.) if the probability of its occurrence tends to $1$ as $n\to\infty$. We say that a property holds for ``almost every'' element of a set if it holds a.a.s.\ for a uniformly random element of the set.}, the first edge that makes the minimum degree one connects the graph, and creates a perfect matching. Likewise, when the minimum degree becomes two, the graph immediately contains a Hamilton cycle. Philosophically, spanning structures appear once local obstructions disappear.

For a graph $G = (V,E)$ and $p \in [0,1]$, let $G(p)$ denote the distribution on subgraphs of $G$ in which each edge is retained with probability $p$, independently of the other edges. Recently, a series of papers \cite{krivelevich2014robust, glebov2017threshold, krivelevich2015long, riordan2014long} extended the above philosophy to $G(p)$ for various $G$. For example, in \cite{krivelevich2014robust} it was shown that if $G$ is a Dirac graph, then the threshold for Hamiltonicity of $G(p)$ remains $\Theta \left( \log n / n \right)$. See \cite{sudakov2017robustness} for a survey of these and related results.

In this paper we consider the threshold $p_0$ for the appearance of a perfect matching in $G(p)$ where $G$ is a $k$-regular bipartite graph on $2n$ vertices. The celebrated permanent inequalities of Bregman \cite{bregman1973some} and Egorychev--Falikman \cite{egorychev1981solution, falikman1981proof} imply that the number of perfect matchings in $G$ is $\left((1+o(1))\frac{k}{e}\right)^n$. In particular, this number depends little on the specific structure of $G$. It is therefore natural to conjecture that $p_0$ depends only on $n$ and $k$. Furthermore, the logical candidate is the threshold for the disappearance of isolated vertices in $G(p)$, which is $p = \Theta(\log n / k)$.

Indeed, Goel, Kapralov, and Khanna \cite[Theorem 2.1]{goel2010perfect} showed that there exists a constant $c$ such that for any $k\leq n$, if $p = c n \log n / k^2$, then with high probability $G(p)$ contains a perfect matching. In particular, if $k = \Omega(n)$, $p = O(\log n/k)$ suffices.

For $k = \omega \left( \frac{n}{\log^{1/3} n} \right)$ we considerably strengthen this result. Namely, we show that if one reconstructs $G$ by adding its edges one by one in a random order, then typically a perfect matching appears at the same moment that the last isolated vertex vanishes. As a consequence, it follows that for any $C>1$, if $p = C \log(n)/k$, then with high probability $G(p)$ contains a perfect matching.

Formally, a \termdefine{graph process in $G = (V,E)$} is a sequence of graphs
\[
(V,\emptyset) = G_0, G_1, \ldots, G_{|E|} = G
\]
on the vertex set $V$, where for each $i$, $G_i$ is obtained from $G_{i-1}$ by adding a single edge of $G$. The \termdefine{hitting time} of a monotone graph property $P$ with respect to a graph process is $\min\{t: G_t \in P\}$. 

For a graph process $\tilde{G}$, let $\tau_M(\tilde{G})$ and $\tau_I(\tilde{G})$ denote the hitting times for containing a perfect matching and having no isolated vertices, respectively. Clearly, for every graph process $\tilde{G}$ we have $\tau_M(\tilde{G})\geq \tau_I(\tilde{G})$. Our main result is that if $G$ is sufficiently dense and $\tilde{G}$ is chosen uniformly at random, equality a.a.s.\ holds.

\begin{thm}\label{thm:main}
	Let $k = \omega \left( \frac{n}{\log^{1/3} n} \right)$, let $G$ be a $k$-regular bipartite graph on $2n$ vertices, and let $\tilde{G}$ be a uniformly random graph process in $G$. Then, a.a.s.\  $\tau_M(\tilde{G})=\tau_I(\tilde{G})$.
\end{thm}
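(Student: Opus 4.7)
The plan is to show that a.a.s.\ $\tilde G_{\tau_I}$ contains a perfect matching; together with the trivial inequality $\tau_M\geq\tau_I$, this yields the theorem. The route is to pass to the edge-percolation model $G(p)$, establish a sharp Hall-type expansion lemma for $G(p)$ at $p$ just below the isolation threshold $\log n/k$, and conclude via Hall's theorem.

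Let $\omega=\omega(n)\to\infty$ grow slowly (say $\omega=\log\log n$) and set $p^{\pm}:=(\log n\pm\omega)/k$. The random process $\tilde G$ is equivalent, after conditioning on the edge count, to exposing the edges of $G(p)$ in a uniformly random order. Concentration of $|E(G(p^{\pm}))|$ and of the number of isolated vertices in $G(p^{\pm})$ (whose mean is $\approx 2n(1-p^{\pm})^k$) place $\tau_I$ a.a.s.\ in the window $\bigl[|E(G(p^-))|,|E(G(p^+))|\bigr]$. It therefore suffices to show that a.a.s., for every $p$ in this window, $G(p)$ contains a perfect matching whenever it has no isolated vertex.

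The heart of the proof is the following expansion lemma: a.a.s., $G(p^-)$ satisfies $|N_{G(p^-)}(S)|\geq|S|$ for every set $S$ contained in one bipartition class, with $2\leq|S|\leq n/2$, provided $S$ contains no isolated vertex of $G(p^-)$. Monotonicity in $p$ extends the property to the whole window. Given the lemma, Hall's condition in $\tilde G_{\tau_I}$ follows: for $S$ with $|S|\geq 2$, split off the few vertices of $S$ that were isolated in $G(p^-)$ but de-isolated by time $\tau_I$, apply the lemma to what remains, and absorb the resulting deficit via a secondary union bound over the $e^{O(\omega)}$ possible configurations of ex-isolated vertices together with the new edges they acquired during $[p^-,\tau_I]$. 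Sets of size $>n/2$ are handled by the standard complementary argument on $T=B\setminus N(S)$, invoking the symmetric side of the lemma on the $B$-part.

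The lemma itself is proved by fixing $|S|=s$ and analyzing $|N_{G(p^-)}(S)|$, which is a sum of independent Bernoullis indexed by $v\in N_G(S)$ with success probabilities $1-(1-p^-)^{d_G(v,S)}$. Since $G$ is $k$-regular, $\sum_{v\in N_G(S)}d_G(v,S)=sk$ and $|N_G(S)|\geq\max(s,k)$, so $\mathbb{E}\,|N_{G(p^-)}(S)|\gtrsim sp^-k\sim s\log n$, and a Chernoff-type estimate controls the lower tail. A naive union bound over the $\binom{n}{s}\leq n^s$ choices of $S$ closes the argument for most $s$, but the intermediate range $s\asymp n/\log n$ is delicate and requires passing to minimal Hall-violators: count pairs $(S,T)$ with $T=N_{G(p^-)}(S)$, $|T|=s-1$, and every $v\in T$ having all its $G(p^-)$-edges inside $S$. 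The probability of such a configuration factorizes with exponent $(s-1)k-e_G(S,T)$ in $(1-p^-)$, and the $k$-regularity of $G$ bounds $e_G(S,T)$ by $\min(s(s-1),(s-1)k)$, producing just enough cancellation to beat the combinatorial count precisely when $k=\omega(n/\log^{1/3}n)$. This intermediate-range estimate is the main obstacle; the remaining pieces---window concentration, symmetric Hall reduction, and bookkeeping of ex-isolated vertices---are technical but essentially routine.
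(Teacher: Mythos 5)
Your overall strategy (Hall's theorem plus a union bound over potential violators, with a coupling to handle the hitting time) is the right starting point, but the central step --- the union bound in the intermediate range --- does not close, and no amount of ``passing to minimal violators'' fixes it for an \emph{arbitrary} $k$-regular $G$. Your bound $e_G(S,T)\leq\min(s(s-1),(s-1)k)$ gives, for $s\geq k$, only $e_G(S,T^c)=sk-e_G(S,T)\geq k$; with $p^-=(\log n-\omega)/k$ the probability that a single such cut has no outgoing edge in $G(p^-)$ is then only bounded by $(1-p^-)^{k}\approx e^{\omega}/n$, while the number of candidate pairs $(S,T)$ with $|S|=s$ is $\exp\left(\Theta\left(s\log(n/s)\right)\right)$. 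For $s$ of order $n/\log n$ (or indeed any $s=\omega(1)$) the union bound diverges catastrophically. The deeper issue is that $k$-regularity alone gives no control on \emph{how many} cuts attain few outgoing edges: a priori there can be $n^{\Theta(k)}$ cuts whose cross-edge count is $O(nk/\log n)$, and each is a plausible Hall violator. This is not a technicality --- the counterexample of Proposition \ref{prop:counterexample} shows the theorem is \emph{false} for $k=\Theta\left(\frac{n}{\log n\log\log n}\right)$, so any correct proof must use the hypothesis $k=\omega(n/\log^{1/3}n)$ through some genuinely structural mechanism, and your sketch never indicates where $\log^{1/3}n$ would enter. Relatedly, your expansion lemma's expectation heuristic $\mathbb{E}\,|N_{G(p^-)}(S)|\gtrsim sp^-k$ is wrong when $N_G(S)$ is small and the edges of $S$ concentrate on it (the Bernoulli success probabilities saturate at $1$, so the expectation is only about $|N_G(S)|$), which is exactly the dangerous case.

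The paper's proof spends essentially all of its effort on precisely this point. Its structural lemma (Lemma \ref{lem:structure}) shows that in any sufficiently dense regular bipartite graph, all ``internal'' cuts (those with at most $4nk/\log n$ cross edges) cluster, up to distance $\eps k$ with $\eps=\frac{n}{k\log^{1/3}n}$, around only $m=2^{\Theta(n/k)}$ representative cuts --- a subpolynomial number --- built from pairwise disjoint ``atoms.'' Non-internal cuts are killed by a crude union bound; for cuts near a fixed representative one cannot union bound either, and the paper instead runs a delicate conditional analysis in terms of the shifted set $\Delta$ and the high-cross-degree set $\Gamma$, after removing a small matching covering low-degree vertices. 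If you want to salvage your approach, you would need a substitute for Lemma \ref{lem:structure}; without it, the argument proves the theorem only for pseudorandom $G$ (where every cut expands), which the authors note is essentially trivial.
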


\begin{cor}
   For $G$ and $k$ as above, 
   \begin{itemize}
	   \item If $p = \frac{\log n - \omega(1)}{k}$, then a.a.s.\ $G(p)$ does not contain a perfect matching.
	   \item If $p = \frac{\log n + \omega(1)}{k}$, then a.a.s.\ $G(p)$ contains a perfect matching.
   \end{itemize}
\end{cor}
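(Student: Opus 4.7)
The plan is to show that a.a.s.\ $G_{\tau_I(\tilde{G})}$ already contains a perfect matching; combined with the trivial inequality $\tau_M(\tilde{G}) \geq \tau_I(\tilde{G})$, this yields the theorem. By the standard coupling of the graph process with the binomial subgraph $G(p)$, and the sharp concentration of $\tau_I$ around the value $p = \log n/k$, the task reduces to the following assertion: for $p = (\log n + \omega(1))/k$, a.a.s.\ either $G(p)$ has an isolated vertex or $G(p)$ contains a perfect matching. A short two-round exposure argument will then be needed to pass from this $G(p)$ statement back to the hitting-time formulation.

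To verify Hall's condition in $G(p)$ under the no-isolated-vertex assumption, we union-bound over minimal deficient sets. Suppose $S \subseteq A$ is minimal with $|N_{G(p)}(S)| < |S|$; set $s = |S|$ and $T = N_{G(p)}(S)$. Then $|T| = s - 1$, $T \subseteq N_G(S)$, and by minimality every $v \in T$ receives at least two $G(p)$-edges from $S$. For a fixed such pair $(S, T)$, independence of edges yields
\[
\Prob\bigl[N_{G(p)}(S) = T\bigr] \leq (1-p)^{e_G(S, B \setminus T)} \cdot \prod_{v \in T} \Prob\bigl[\mathrm{Bin}(d_v, p) \geq 2\bigr],
\]
where $d_v = |N_G(v) \cap S|$ and $e_G(\cdot,\cdot)$ denotes the edge count in $G$. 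By a standard K\"onig-type argument, passing to the other side when necessary, we may further assume $s \leq n/2$.

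The heart of the proof is a case analysis in $s$. For very small $s$ (say $s = O(\log n)$), the concentration of $|N_{G(p)}(v)|$ around its mean $pk = \log n$ gives $|N_{G(p)}(S)| \geq \max_{v \in S} |N_{G(p)}(v)| = \Theta(\log n)$ with very high probability, which beats $s$ handily. For $s$ up to $\min(k, n/2)$, the bound $e_G(S, B \setminus T) \geq s(k - s + 1)$---coming from $e_G(T, S) \leq |T| \cdot \min(k, s)$---combined with a careful enumeration of $(S, T)$ pairs suffices. When $s > k$ the bound degrades to $e_G(S, B \setminus T) \geq k$, and one must exploit the minimality constraint, which forces at least $2(s-1)$ $G(p)$-edges inside $S \times T$; the probability of this constellation can then be balanced against the combinatorial cost $\binom{n}{s}\binom{n}{s-1}$.

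The main obstacle is the medium regime with $s$ close to $n/2$ when $k$ is near the lower end of the allowed range. Here $(1-p)^{e_G(S, B \setminus T)} \leq (1-p)^k$ decays only as $1/n$ per pair, against an enumeration cost of up to roughly $4^n$. Closing the union bound requires a structural lemma: the pairs $(S, T)$ realizing the extreme case $e_G(S, T) = k(s-1)$ correspond to the configuration $N_G(T) \subseteq S$, and such configurations are sufficiently sparse in any $k$-regular bipartite graph whenever $k = \omega(n/\log^{1/3} n)$. I expect this combinatorial counting lemma, coupled with a refined use of the minimality-enforced lower tail on $e_{G(p)}(S, T)$, to be the main technical engine of the proof, and the precise exponent $1/3$ in the hypothesis to emerge from matching the polynomial-in-$\log n$ losses in that counting step against the Chernoff-style tail gains.
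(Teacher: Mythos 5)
There is a genuine gap, and also a mismatch of scope. The corollary is intended as a short deduction from Theorem \ref{thm:main}, which the paper has already proved: for the second bullet, couple $G(p)$ with the random graph process (reveal edges in increasing order of i.i.d.\ uniform labels), note that for $p=(\log n+\omega(1))/k$ the expected number of isolated vertices $2n(1-p)^k\le 2e^{-\omega(1)}=o(1)$, so a.a.s.\ $G(p)$ has no isolated vertices and hence contains $G_{\tau_I}=G_{\tau_M}$, which has a perfect matching; for the first bullet, a second-moment argument (the isolation events for vertices on one side are independent, as in Claim \ref{lem:isolated}) shows a.a.s.\ an isolated vertex survives, so no perfect matching exists. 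Your proposal ignores this route entirely, does not address the first bullet, and instead sets out to reprove Theorem \ref{thm:main} itself.

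Within that reproof, the decisive step is missing. Your union bound over minimal deficient pairs $(S,T)$ is the classical Erd\H{o}s--R\'enyi argument, and the paper explicitly notes that it fails for arbitrary regular bipartite $G$: there can be many cuts with few outgoing (cross) edges, so the bound $(1-p)^{e_G(S,B\setminus T)}$ is only about $1/n$ per cut against an enumeration cost of order $4^n$. You correctly identify this ``medium regime'' as the obstacle and posit that the bad configurations are ``sufficiently sparse,'' but that assertion is exactly the content of the paper's structural Lemma \ref{lem:structure} (grouping all internal cuts into $2^{\Theta(n/k)}$ equivalence classes built from atoms) together with the conditional analysis of Section \ref{sec:proof}; none of it is supplied. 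A secondary problem: your small-$s$ case claims every vertex has $G(p)$-degree $\Theta(\log n)$ ``with very high probability,'' but at $p\approx\log n/k$ a fixed vertex has degree $\le 1$ with probability $\tilde{\Theta}(1/n)$, so low-degree vertices do exist and must be handled separately (this is why the paper introduces $V_{low}$ and the matching $M$). As written, the proposal is a plan whose main technical engine is acknowledged but absent.
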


Quite surprisingly, it turns out that these results fail when $k$ is significantly smaller than $n / \log^{1/3} n$. We analyze a construction of Goel, Kapralov, and Khanna \cite{goel2010perfect} in which the threshold for a perfect matching is much larger than the threshold for the disappearance of isolated vertices.

\begin{prop}\label{prop:counterexample}
	There exist infinitely many $k$-regular bipartite graphs $G$ on $n$ vertices, with $k=\Omega \lt(\frac{n}{\log(n) \cdot \log(\log(n))}\rt)$, such that a.a.s.\ the random subgraph $G(p)$ does not contain a perfect matching for any $p \leq 2 \log n / k$. On the other hand, if $p = \left( \log n + \omegaone \right) / k$, then a.a.s.\ $G(p)$ contains no isolated vertices.
\end{prop}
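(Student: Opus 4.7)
The plan is to exhibit the construction of Goel, Kapralov and Khanna \cite{goel2010perfect} (the same one used for their bipartite streaming lower bound) and verify the two bullets separately. The graph $G$ is $k$-regular bipartite and contains a distinguished ``bottleneck'' pair $A\subseteq L$, $B=N_G(A)\subseteq R$ of sizes $\Theta(k)$, whose internal bipartite structure is calibrated so that the matching threshold for a matching of $A$ into $B$ inside $G(p)$ is of order $n\log n/k^2$ rather than the isolated-vertex threshold $\log n/k$. Since $N_G(A)\subseteq B$, failure of Hall's condition for some $A'\subseteq A$ in $G(p)$, i.e.\ $|N_{G(p)}(A')\cap B|<|A'|$, already precludes a perfect matching of $G(p)$.

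For the first bullet, fix $p\leq 2\log n/k$. The strategy is a first/second-moment Hall violation. For a carefully chosen $A'\subseteq A$, let $X=|\{v\in B:v\not\sim A'\text{ in }G(p)\}|$. Then
\[
\E[X]=\sum_{v\in B}(1-p)^{d_v},\qquad d_v=|N_G(v)\cap A'|,
\]
and the construction is arranged so that $\E[X]$ exceeds $|B|-|A'|$ by a polylogarithmic margin. Crucially, for distinct $v,v'\in B$ the events $\{v\not\sim A'\}$ and $\{v'\not\sim A'\}$ depend on disjoint edge sets and are therefore independent, whence $\var(X)\leq\E[X]$; Chebyshev's inequality then gives $X>|B|-|A'|$ a.a.s., producing the Hall violation. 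The hypothesis $k=\Omega(n/(\log n\log\log n))$ enters here as the condition that makes $n/k$ diverge, so $n\log n/k^2$ exceeds $2\log n/k$ by a super-constant factor, ensuring the GKK bottleneck analysis actually kicks in for every $p\leq 2\log n/k$.

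For the second bullet the direct first-moment estimate suffices: when $p=(\log n+\omega(1))/k$,
\[
\E\bigl[\#\{\text{isolated vertices of }G(p)\}\bigr]=n(1-p)^k\leq n\exp(-pk)=n\exp(-\log n-\omega(1))=o(1),
\]
so by Markov a.a.s.\ $G(p)$ has no isolated vertex. Letting $n$ (and a corresponding admissible $k$) range over an infinite sequence produces the claimed infinitely many examples.

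The principal technical obstacle is the first bullet: identifying the correct subset $A'$ of the GKK construction and verifying the first and second moment estimates uniformly across $p\in(\log n/k,\,2\log n/k]$. Below the isolated-vertex threshold the obstruction is already supplied by an ordinary isolated vertex; it is in this narrow super-threshold window that the violator must come from the bipartite structure of $G[A,B]$ rather than from single-vertex effects, and the calibration of the sizes of $A$, $B$ and the internal degrees of $G[A,B]$ is exactly what makes the violation survive at every such $p$.
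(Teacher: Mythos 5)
Your proof of the second bullet (no isolated vertices) is the standard first-moment computation and matches the paper. The first bullet, however, is where all the content of the proposition lives, and there you have only stated a plan, not a proof: you never specify the construction, the bottleneck pair $(A,B)$, the subset $A'$, or the degrees $d_v$, and you explicitly defer ``the principal technical obstacle'' of verifying the moment estimates. Moreover, the mechanism you propose --- a second-moment argument showing that more than $|B|-|A'|$ vertices of $B$ lose all their edges into $A'$ --- is not how the obstruction in the construction used by the paper actually arises, and it is not clear it can be made to work. In a $k$-regular bipartite graph with $B=N_G(A)$ one has $|B|\geq|A|$ and $\sum_{v\in B}d_v=k|A|$; if $|B|=|A|$ then every $v\in B$ has $d_v=k$ and (taking $A'=A$) the event $\{v\not\sim A'\}$ is just the event that $v$ is isolated, which cannot beat the isolated-vertex threshold, while if $|B|>|A|$ you need more than $|B|-|A'|$ simultaneous failures, and Chebyshev requires the margin $\E[X]-(|B|-|A'|)$ to exceed $\sqrt{\var(X)}$, not merely to be positive by a polylogarithmic amount. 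None of this calibration is supplied, so the first bullet is unproved as written.

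The paper's actual argument uses a different and more concrete obstruction. It builds $G$ from $\log k$ edge-disjoint ``series of resistors'' between two hub vertices $x$ and $y$: each series is a chain of $\ell=10\log\log k$ copies of $K_{k,k}$, consecutive copies joined by $r=k/\log k$ bridge edges. A degree count shows that any perfect matching must, in the series through which $x$ is matched, use at least one bridge edge at each of the $\ell+1$ junctions. At $p=2\log n/k$ a given junction retains some bridge edge with probability $1-(1-p)^r\approx 1-e^{-2}$, the junctions are independent, and $(1-e^{-2})^{\ell+1}=o(1/\log k)$, so a union bound over the $\log k$ series shows that a.a.s.\ no series is fully bridged and hence no perfect matching exists. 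The witness of non-matchability is thus a conjunction of constant-probability failures along a chain, not a deficient neighborhood of a small bottleneck set detected by a second-moment count. To complete your argument you would need to exhibit an explicit construction and carry out the moment calculations you sketch; as it stands, the key half of the proposition is missing.
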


We prove Proposition \ref{prop:counterexample} in Appendix \ref{app:proof_counterexample}.

Theorem \ref{thm:main} is almost a triviality if one assumes that $G$ is pseudorandom (cf.\ \cite[Lemma 3.1]{luria2019threshold}). The main element needed in our proof is a way to control induced subgraphs of $G$ with high discrepancy. To this end we prove a result on the structure of high discrepancy sets in sufficiently dense, regular, bipartite graphs (Lemma \ref{lem:structure}).

The remainder of this paper is organized as follows. Section \ref{ssec:notation} introduces our notation. In Section \ref{sec:structural} we prove Lemma \ref{lem:structure}, and in Section \ref{sec:pseudorandom} we establish some probabilistic tools. Finally, in Section \ref{sec:proof}, we prove Theorem \ref{thm:main}.

\subsection{Notation}\label{ssec:notation}

Throughout the paper, we disregard floor and ceiling signs to improve readability. Large real numbers should be rounded to the nearest integer. We denote by ``$\log$'' the natural logarithm.

For an integer $m\in\N$, we define $[m] = \left\{ 1,2,\ldots,m \right\}$. Let $X$ be a set and let $f: [|X|] \to \R$. We sometimes abuse notation by writing
\[
\sum_{|S|=1}^{m}\binom{|X|}{|S|}f(|S|)
=
\sum_{S \subseteq X : |S| \in [m]} f(|S|).
\]

Let $f,g: \N \to \R$. We write $f= \tilde{O} (g)$ if, for some $c>0$ and all large enough $n \in \N$, $f(n) \leq g(n) \log^c \left( g(n) \right)$.

Let $G = (V,E)$ be a graph. For $A,B \subseteq V$, denote by $E_G(A,B)$ the set of edges incident to both $A$ and $B$, and let $e_G(A,B) = |E_G(A,B)|$. Let $N_G(A)$ denote the set of \termdefine{neighbors} of $A$, i.e., the set $\left\{ v \in V : \exists a \in A \text{ s.t.\ } av \in E \right\} \setminus A$. We define $G \setminus A$ to be the induced graph on the vertex set $V(G)\setminus A$.

Suppose $G$ is a bipartite graph with vertex partition $X,Y$. A vertex set $A$ is \termdefine{partite} if $A \subseteq X$ or $A \subseteq Y$. We denote by $A^c$ the complement of $A$ w.r.t.\ its own part, i.e., $X \setminus A$ if $A \subseteq X$ and $Y \setminus A$ if $A \subseteq Y$. If $A$ is empty, it will be clear from context whether $A^c = X$ or $A^c = Y$.

By a common abuse of notation, we speak of $G(p)$ as having a certain property, instead of saying that $G \sim G(p)$ has that property.

In certain places we will need to show that events not only occur a.a.s., but that the probability of their non-occurence decays at a polynomial rate. We will say that such events occur \termdefine{with very high probability} (\termdefine{w.v.h.p.}). Formally, we say that a sequence of events $\{A_n\}_{n\in\N}$ occurs w.v.h.p.\ if $\log \left(\Prob [A_n^c] \right) = - \Omega \left( \log n \right)$.

\section{A Structural Lemma}\label{sec:structural}

Throughout this section $G = (X \dcup Y, E)$ is a $k$-regular bipartite graph on $2n$ vertices. A \termdefine{cut} in $G$ is a pair $(S,T)$ where $S \subseteq X$ and $T \subseteq Y$. We call $(S,T)$ a \termdefine{Hall cut} if $|S| > |T|$ and $N(S) \subseteq T$.
Hall's marriage theorem states that a balanced bipartite graph contains a perfect matching if and only if it contains no Hall cuts. The main idea in the proof of Theorem \ref{thm:main} is to show that a.a.s.\ $G_{\tau_I}$ does not contain a Hall cut.

Let $(S,T)$ be a cut in $G$. We call $E_G(S,T^c)$ the \termdefine{outgoing edges} of $(S,T)$. The \termdefine{cross edges} of $G$ with respect to $(S,T)$ are those in $E(S \cup T, S^c \cup T^c)$. We call the remaining edges \termdefine{parallel}. For a vertex $x \in V(G)$, we denote by $\deg1_{G,S,T}^{\mathrm{Par}}(x)$ and $\deg1_{G,S,T}^{\mathrm{Cr}}(x)$ the number of parallel and cross edges incident to $x$, respectively. Similarly, we denote by $N_{G,S,T}^{\mathrm{Par}}(x)$ the set of neighbors of $x$ that are connected to $x$ by a parallel edge. If the cut $(S,T)$ is clear from the context, we sometimes write $\deg1_G^{\mathrm{Par}} (x)$ and $\deg1_G^{\mathrm{Cr}}(x)$.

We define the following distance function on the set of cuts in $G$:
\[
d((S_1,T_1),(S_2,T_2)) = |S_1 \setminus S_2|+ |S_2 \setminus S_1|+ |T_1 \setminus T_2|+ |T_2 \setminus T_1|.
\]
For $C \in \mathbb{R}$, we say that two cuts are \termdefine{$C$-close} if their distance is at most $C$.

\begin{observation}\label{obs:crossedges}
	Let $(S,T)$ be a cut in $G$. Then $e(S,T^c)=k\cdot (|S|-|T|) + e(S^c, T)$.
\end{observation}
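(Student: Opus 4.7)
The plan is to prove the identity by double counting the edges incident to $S$ and to $T$ using the $k$-regularity of $G$. Since $G$ is bipartite with parts $X$ and $Y$, every edge incident to $S \subseteq X$ has its other endpoint in $Y = T \cup T^c$, and analogously for $T$.

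First I would count the edges with an endpoint in $S$. Summing degrees, $\sum_{x \in S} \deg(x) = k|S|$. Each such edge goes either to $T$ or to $T^c$, and the parts are disjoint, so
\[
e(S,T) + e(S,T^c) = k|S|.
\]
Next, counting edges with an endpoint in $T$ in the same way,
\[
e(S,T) + e(S^c,T) = k|T|.
\]

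Subtracting the second equation from the first cancels $e(S,T)$ and yields
\[
e(S,T^c) - e(S^c,T) = k(|S| - |T|),
\]
which rearranges to the desired identity. There is no real obstacle here; the statement is a routine double-counting consequence of $k$-regularity and bipartiteness, and it will be used later as a convenient bookkeeping identity relating outgoing edges of a cut $(S,T)$ to the ``reverse'' outgoing edges of the complementary cut.
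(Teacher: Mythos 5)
Your proof is correct and is exactly the paper's argument: write the two degree-sum identities $e(S,T)+e(S,T^c)=k|S|$ and $e(S,T)+e(S^c,T)=k|T|$ and subtract. Nothing to add.
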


\begin{proof}
	Since $G$ is $k$-regular we have:
	\begin{align*}
	& e(S,T)+e(S,T^c) = k \cdot |S| \\
	& e(S,T)+e(S^c,T)=k \cdot |T|.
	\end{align*}
	Subtracting the second equation from the first yields the result.
\end{proof}

\begin{observation}\label{obs:probHall}
	Let $(S,T)$ be a cut in $G$ with $|S|>|T|$. Let $C = e_G(S\cup T, {S^c\cup T^c})$ be the number of cross edges in $G$ w.r.t.\ $(S,T)$. Then, for any $p \in (0,1)$, it holds that:
	\[
	\prob \lt[(S,T)\mbox{ is a Hall cut in } G(p) \rt] \leq (1-p)^{C/2}.
	\]
\end{observation}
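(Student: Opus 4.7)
The plan is to observe that being a Hall cut in $G(p)$ is, for this fixed $(S,T)$, simply the event that every edge of $G$ from $S$ to $T^c$ is absent in $G(p)$, and then to compare this count of ``outgoing'' edges to the total number of cross edges $C$.

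First I would note that the inequality $|S|>|T|$ is part of the hypothesis and has nothing to do with $G(p)$, so $(S,T)$ is a Hall cut in $G(p)$ if and only if $N_{G(p)}(S)\subseteq T$, i.e., none of the $e_G(S,T^c)$ outgoing edges survive the $p$-retention. Since these edges are retained independently, this event has probability exactly $(1-p)^{e_G(S,T^c)}$.

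The key step is then to show that the outgoing edges account for at least half of all cross edges. Writing the cross edges as the disjoint union $E_G(S,T^c)\dcup E_G(S^c,T)$, this gives $C=e_G(S,T^c)+e_G(S^c,T)$. Observation \ref{obs:crossedges} says $e_G(S,T^c)=k(|S|-|T|)+e_G(S^c,T)$, and since $|S|>|T|$ we obtain $e_G(S,T^c)\geq e_G(S^c,T)$, hence $e_G(S,T^c)\geq C/2$. Because $1-p\in(0,1)$, the function $x\mapsto (1-p)^x$ is nonincreasing, so $(1-p)^{e_G(S,T^c)}\leq (1-p)^{C/2}$, which is the claimed bound.

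There is no real obstacle here: the only thing one has to notice is the bookkeeping that splits cross edges into outgoing and ``incoming'' pieces and lets Observation \ref{obs:crossedges} force the outgoing piece to dominate. The point of the observation, I expect, is to prepare a union bound in which one controls Hall cuts through the single global parameter $C$ rather than the less symmetric $e_G(S,T^c)$.
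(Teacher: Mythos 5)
Your proof is correct and follows essentially the same route as the paper: split the cross edges as $E_G(S,T^c)\dcup E_G(S^c,T)$, invoke Observation \ref{obs:crossedges} together with $|S|>|T|$ to get $e_G(S,T^c)\geq C/2$, and bound the probability that all outgoing edges are absent. No gaps.
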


\begin{proof}
	We have $e_G(S\cup T, {S^c\cup T^c}) = e_G(S,T^c) + e_G(S^c,T)$. As $|S|>|T|$, Observation \ref{obs:crossedges} implies that $e_G(S,T^c) > e_G(S^c,T)$ and therefore $e_G(S,T^c) > e_G(S\cup T, {S^c\cup T^c})/2$. The probability that none of these cross edges are edges in $G(p)$ (and thus $(S,T)$ is a Hall cut) is therefore bounded from above by $(1-p)^{C/2}$, as desired.
\end{proof}

The following structural lemma is the heart of our proof. Observation \ref{obs:probHall} implies that if $G$ has almost no cuts with few cross edges, a union bound is enough to show that a.a.s.\ $G(p)$ contains no Hall cuts. This is the case, for example, in random regular graphs. However, in an arbitrary graph this need not hold. Therefore, we must understand the behavior of cuts with few cross edges, and hence a significant chance of being Hall cuts in $G(p)$. We show that in any sufficiently dense, regular, bipartite graph, all such cuts can be grouped into a small (specifically, subpolynomial) number of equivalence classes. This allows us to control the contribution of these cuts to the probability that $G(p)$ contains a Hall cut.

\begin{definition}\label{def:internal}
	Let $c > 0$. A cut $(S,T)$ is \termdefine{$c$-internal} if it has at most $4 c nk / \log n$ cross edges.
\end{definition}
If a cut is $1$-internal, we sometimes just say that it is \termdefine{internal}. Note that $(S,T)$ is $c$-internal if and only if its complement $(S^c,T^c)$ is $c$-internal. Indeed, both cuts have the same cross edges.

\begin{lem}\label{lem:structure}
	Let $G = (X \dot{\cup} Y, E)$ be a $k$-regular bipartite graph on $2n$ vertices, with $k = \omega \left( \frac{n}{\log^{1/3} n} \right)$ and $n$ sufficiently large. Set $\varepsilon = \frac{n}{k \log^{1/3} (n)} = \oone$. There exist $m = 2^{\Theta(n / k)}$ and cuts $(S_1,T_1),\ldots,(S_m,T_m)$ with the following properties.
	\begin{enumerate}
		\item For every $i \in [m]$ and $x \in V(G)$, we have $\deg1_{G,S_i,T_i}^{\mathrm{Cr}}(x) \leq (1+\varepsilon)\frac{k}{2}$.
		\item Every internal cut $(S,T)$ with $|S| > |T|$ is $\varepsilon k$-close to $(S_i,T_i)$ for some $i \in [m]$.
	\end{enumerate}
\end{lem}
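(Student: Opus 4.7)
The plan is to smooth every internal cut to a canonical cut with bounded cross-degrees, and then bound the number of distinct canonical cuts by $2^{O(n/k)}$. For the first part, start with an internal cut $(S,T)$ satisfying $|S|>|T|$ and run the greedy procedure: while some vertex $x$ has cross-degree $\deg^{\mathrm{Cr}}_{G,S,T}(x)>(1+\varepsilon)k/2$, flip $x$ to the opposite side of its part (moving $x\in S$ to $S^c$, or $x\in T$ to $T^c$, and symmetrically). Flipping $x$ only toggles the cross/parallel status of the $k$ edges at $x$, so the total cross-edge count shifts by $k-2\deg^{\mathrm{Cr}}(x)$, which is strictly less than $-\varepsilon k$ for any ``bad'' vertex. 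Starting from at most $4nk/\log n$ cross edges and staying non-negative, the procedure terminates within $\frac{4nk/\log n}{\varepsilon k}=\frac{4k}{\log^{2/3} n}$ flips. Since $k\leq n$ and $n$ is large, this is at most $n/\log^{1/3} n=\varepsilon k$. Each flip changes the distance by at most $1$, so the output $(S^*,T^*)$ satisfies $d((S,T),(S^*,T^*))\leq \varepsilon k$, has all cross-degrees at most $(1+\varepsilon)k/2$, and is a legitimate representative witnessing both properties of the lemma for $(S,T)$.

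The structural reason to expect the bound $m=2^{O(n/k)}$ on the number of canonical outputs is the following observation. In any canonical cut $(S^*,T^*)$, the subgraph of parallel edges has minimum degree at least $(1-\varepsilon)k/2\geq k/3$; hence each of its connected components has size at least $k/3$, giving at most $6n/k$ components, and each component must lie entirely in $S^*\cup T^*$ or entirely in its complement. If the component decomposition were independent of the cut, we would immediately get at most $2^{6n/k}=2^{O(n/k)}$ canonical outputs, matching the target.

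The main obstacle is that the component decomposition depends on the particular cut. My plan is to construct, directly from $G$, a master partition $V=V_1\dcup\cdots\dcup V_r$ with $r=O(n/k)$ such that every canonical cut is $o(\varepsilon k)$-close to a partition-respecting cut, i.e.\ one placing each $V_j$ entirely on a single side. The natural recipe is iterative bisection: while some current piece $P$ admits a bipartition into two $\Omega(k)$-sized pieces with only $\tilde{O}(k)$ crossing edges, split along such a bipartition; since pieces cannot shrink below $\Omega(k)$, we obtain $r=O(n/k)$. The $2^{O(r)}$ partition-respecting cuts, possibly followed by one more smoothing pass to restore property (a), constitute the family $\{(S_i,T_i)\}$. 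The delicate point---which I expect to be the most technical part of the proof---is calibrating the splitting thresholds so that simultaneously (i) $r=O(n/k)$, (ii) any canonical cut that crosses a piece $V_j$ internally must pay enough cross edges there to keep the total number of misaligned vertices compatible with the $4nk/\log n$ internal-cut budget, and (iii) the cumulative snap-to-partition distortion plus the $O(k/\log^{2/3} n)$ smoothing cost still fit inside $\varepsilon k=n/\log^{1/3} n$, uniformly across all $k=\omega(n/\log^{1/3} n)$.
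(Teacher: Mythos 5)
Your first step (greedy flipping of vertices with cross-degree above $(1+\varepsilon)k/2$, using the total cross-edge count as a potential that drops by at least $\varepsilon k$ per flip) is correct and is a clean alternative to the paper's rounding step, which instead defines the canonical cut directly as the set of vertices with parallel degree at least $k/2$ relative to a union of ``atoms.'' It correctly yields property (a) and the $\varepsilon k$-closeness of each internal cut to its canonical image. The observation that the parallel subgraph of a canonical cut has minimum degree at least $k/3$ and hence at most $O(n/k)$ components is also correct, and correctly identifies why a count of $2^{O(n/k)}$ is plausible.

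However, the heart of the lemma is precisely the part you leave as a plan: showing that the canonical cuts (equivalently, the internal cuts up to $\varepsilon k$-perturbation) number only $2^{O(n/k)}$ rather than, a priori, exponentially many in $n$. The paper's mechanism for this is a dichotomy (its Claim 2.6): any two internal cuts are either $O(cn/\log n)$-close or $k/10$-far. This turns closeness into an equivalence relation with a well-defined intersection operation on classes, from which one extracts $O(n/k)$ pairwise disjoint ``atoms'' and proves that every internal cut is equivalent to the union of the atoms it contains. Your iterative-bisection plan does not supply a substitute for this dichotomy, and the calibration you flag as ``the most technical part'' is exactly where the argument would break without it: an internal cut has a cross-edge budget of $4nk/\log n$, which vastly exceeds the $\tilde{O}(k)$ splitting threshold of a single piece, so a canonical cut could in principle shave a sub-$\Omega(k)$ chunk off each of the $O(n/k)$ pieces without rendering any piece splittable, and nothing in your sketch bounds the cumulative misalignment by $o(\varepsilon k)$. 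Point (ii) of your plan is therefore an unproved assertion, not a routine calculation, and as written the proposal does not establish property (b) of the lemma.
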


\begin{rmk}
	For graphs satisfying $k = \Omega (n)$ it is relatively straightforward to derive Lemma \ref{lem:structure} from Szemer\'edi's regularity lemma (albeit with a vastly larger bound on $m$). Alternatively, one could use the decomposition of dense regular graphs into ``robust components'' (induced subgraphs with good expansion properties) due to K{\"u}hn, Lo, Osthus, and Staden \cite[Theorem 3.1]{kuhn2014robust}.
\end{rmk}

In order to prove lemma \ref{lem:structure}, we first show that the internal cuts have a lattice-like structure. The following claim implies that for sufficiently small $c$, the distance between two $c$-internal cuts is either very large or very small.

\begin{claim}\label{clm:nearorfar}
	Let $(S_1,T_1)$ and $(S_2,T_2)$ be two $c$-internal cuts. Then 
	\[
	d((S_1,T_1),(S_2,T_2))\leq \frac{40 c n}{\log n} \text{ or } d((S_1,T_1),(S_2,T_2)) \geq \frac{k}{10}.
	\]
\end{claim}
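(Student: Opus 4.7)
Set $A := S_1 \triangle S_2 \subseteq X$ and $B := T_1 \triangle T_2 \subseteq Y$, so that $d := d((S_1,T_1),(S_2,T_2)) = |A| + |B|$. Let $C_i := e_G(S_i \cup T_i, S_i^c \cup T_i^c)$ denote the number of cross edges of the cut $(S_i,T_i)$; the $c$-internality hypothesis says $C_1, C_2 \leq 4cnk/\log n$. The strategy is to lower-bound $C_1 + C_2$ in terms of $d$ and $e_G(A,B)$, then combine with an elementary upper bound on $e_G(A,B)$ to force $d$ out of the interval $(40cn/\log n,\, k/10)$.

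The key combinatorial observation is: every edge $xy \in E(G)$ with $x \in A$ and $y \in B^c$ (or, symmetrically, $x \in A^c$ and $y \in B$) is cross for exactly one of the two cuts. This is a short case analysis: if $x \in A$ then exactly one of $x \in S_1$, $x \in S_2$ holds, while $y \in B^c$ means that $\chi_{T_1}(y) = \chi_{T_2}(y)$; running through the two possibilities for $y$ and the two possibilities for $x$ shows that the indicator that $xy$ is cross flips precisely once between the two cuts. Summing this contribution over edges and noting that the remaining edges contribute $0$ or $2$, I obtain
\[
C_1 + C_2 \;\geq\; e_G(A, B^c) + e_G(A^c, B).
\]
Since $G$ is $k$-regular, $e_G(A,B^c) = k|A| - e_G(A,B)$ and $e_G(A^c,B) = k|B| - e_G(A,B)$, so this becomes $C_1 + C_2 \geq kd - 2 e_G(A,B)$.

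Combining with $C_1 + C_2 \leq 8cnk/\log n$ yields
\[
e_G(A,B) \;\geq\; \frac{kd}{2} - \frac{4cnk}{\log n}.
\]
Now suppose toward a contradiction that $40 cn/\log n < d < k/10$. Then $4cnk/\log n < kd/10$, so $e_G(A,B) > 2kd/5$. On the other hand, since $G$ is simple bipartite, $e_G(A,B) \leq |A|\,|B| \leq (|A|+|B|)^2/4 = d^2/4$ by AM--GM. Comparing these two inequalities gives $2kd/5 < d^2/4$, i.e.\ $d > 8k/5$, which contradicts $d < k/10$. This completes the proof.

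The only non-routine step is the edge-by-edge parity analysis yielding $C_1 + C_2 \geq e_G(A,B^c) + e_G(A^c,B)$; the rest is the manipulation of $k$-regularity with the AM--GM step at the end. I do not anticipate a serious obstacle, though the case analysis must be written carefully to avoid a sign error in distinguishing the ``parallel'' versus ``cross'' status of an edge under each of the two cuts.
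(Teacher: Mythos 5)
Your proof is correct, and it reaches the same final ``quadratic versus linear'' comparison as the paper ($e_G(\cdot,\cdot) \leq d^2/4$ from simplicity and AM--GM, versus a lower bound of order $kd$ from regularity and internality), but the route to the lower bound is genuinely different. The paper avoids any global accounting: it fixes, without loss of generality, the largest of the four difference sets (say $|S_2 \setminus S_1| \geq d/4$) and bounds $e_G(S_2\setminus S_1, T_2\setminus T_1)$ from below directly, using the internality of $(S_1,T_1)$ to control $e_G(S_2\setminus S_1, T_1)$ and the internality of $(S_2,T_2)$ to control $e_G(S_2\setminus S_1, T_2^c)$; this yields $d(k-d) \leq 32cnk/\log n$. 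You instead work with the full symmetric differences $A = S_1\triangle S_2$, $B = T_1\triangle T_2$ and prove the parity identity $C_1 + C_2 \geq e_G(A,B^c) + e_G(A^c,B)$, which with regularity gives $kd - 2e_G(A,B) \leq C_1+C_2$. Your parity analysis is correct (an edge $xy$ is cross for $(S,T)$ iff $\chi_S(x)\neq\chi_T(y)$, so flipping exactly one endpoint's membership flips the cross indicator), and your version has the minor advantages of being symmetric in the two cuts, avoiding the WLOG step, and in fact producing slightly better constants ($d(k-d/2)\leq 8cnk/\log n$). Both arguments close identically: assuming $40cn/\log n < d < k/10$ forces the edge count between the difference sets to exceed $d^2/4$, a contradiction.
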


\begin{proof}
	Let $d = d((S_1,T_1),(S_2,T_2))$. Without loss of generality assume that $|S_2 \setminus S_1| \geq d/4$. As $(S_1,T_1)$ is $c$-internal, we have:
	\[
	e_G \lt( S_2 \setminus S_1, T_1 \rt) \leq e_G \left( S_1^c,T_1 \right) \leq \frac{4cnk}{\log n}.
	\]
	Similarly, since $(S_2,T_2)$ is $c$-internal, we have:
	\[
	e_G \left( S_2 \setminus S_1, T_2 \right) =  e_G \left( S_2 \setminus S_1, Y \right) - e_G \left( S_2 \setminus S_1, T_2^c \right) \geq e_G \left( S_2 \setminus S_1, Y \right) - \frac{4cnk}{\log n}.
	\]
	Since $G$ is $k$-regular, we have:
	\[
	e_G \left( S_2 \setminus S_1, Y \right) = k \left| S_2 \setminus S_1 \right| \geq \frac{kd}{4}.
	\]
	Therefore:
	\begin{equation}\label{eq:upper}
	e_G \left( S_2 \setminus S_1, T_2 \setminus T_1 \right) \geq e_G \left( S_2 \setminus S_1 , T_2 \right) - e_G \left( S_2 \setminus S_1 , T_1 \right) \geq \frac{kd}{4} - \frac{8cnk}{\log n}.
	\end{equation}
	On the other hand, it is certainly true that $e_G \left( S_2 \setminus S_1, T_2 \setminus T_1 \right) \leq \left| S_2 \setminus S_1 \right| \left| T_2 \setminus T_1 \right|$. As $\left| S_2 \setminus S_1 \right| + \left| T_2 \setminus T_1 \right| \leq d$, we have:
	\begin{equation}\label{eq:lower}
	e_G \left( S_2 \setminus S_1, T_2 \setminus T_1 \right) \leq \frac{d^2}{4}.
	\end{equation}
	Combining \eqref{eq:upper} and \eqref{eq:lower} and rearranging yields:
	\[
	d \left( k - d \right) \leq \frac{32 c nk}{\log n}.
	\]
	Suppose $d < k/10$. Then $k-d > 9k/10$. We thus obtain the inequality:
	\[
	d \leq \frac{320 c n}{9 \log n} < \frac{40 c n}{\log n},
	\]
	as desired.
\end{proof}

\begin{rmk}
	Although Claim \ref{clm:nearorfar} holds for all $c$, it is only meaningful when $40 c n/\log n < k/10$. For convenience, let $\delta = k/n$ denote the density of $G$. We will need to apply the claim for $c = O(1/\delta^2)$. Since $\delta = \omega \left( \log^{-1/3} (n) \right)$, this is in the regime where the claim is meaningful. In fact, this is the source of the lower bound on $k$ in Theorem \ref{thm:main}.
	
	For the rest of this section, $c$ will always be bounded by $O(1/\delta^2)$.
\end{rmk}

We say that two $c$-internal cuts are \termdefine{equivalent} if they are $\left( \eps k/100 \right)$-close. The triangle inequality, together with Claim \ref{clm:nearorfar}, implies that this is an equivalence relation. Let $\mathcal{X}_c$ be the set of equivalence classes of $c$-internal cuts. We say that a cut is \termdefine{trivial} if it is equivalent to $(\emptyset,\emptyset)$.

Note that the quantity $\varepsilon k$ depends only on $n$, and not on $k$. We choose to write $\varepsilon k$ in order to emphasize that this is asymptotically smaller than $k$.

We now define a meet operation on equivalence classes. Note that if the cuts $(S_1,T_1)$ and $(S_2,T_2)$ are $c_1$-internal and $c_2$-internal, respectively, then the cut $\left( S_1 \cap S_2 , T_1 \cap T_2 \right)$ is $(c_1+c_2)$-internal. Indeed, any cross edge of the intersection is a cross edge in at least one of the cuts.

Denote the equivalence class of a $c$-internal cut $(S,T)$ by $\left[ (S,T) \right]_c$. When the value of $c$ is clear from the context, we omit the subscript.

\begin{definition} 
	Let $c_1$ and $c_2$ satisfy $c_1+c_2 = O(1/\delta^2)$. The \termdefine{meet} of $[(S_1,T_1)] \in \mathcal{X}_{c_1}$ and  $[(S_2,T_2)] \in \mathcal{X}_{c_2}$ is 
	\[
	[(S_1,T_1)] \wedge [(S_2,T_2)] \coloneqq [(S_1\cap S_2,T_1\cap T_2)] \in \mathcal{X}_{c_1+c_2}.
	\]
\end{definition}

The fact that this is well-defined, in the sense that it does not depend on the choice of representatives, follows from Claim \ref{clm:nearorfar}. Indeed, different choices of representatives may only change the intersection by at most $80(c_1+c_2)n/\log n$ vertices. This is smaller than $\eps k/100$, and therefore the two intersections are equivalent. 

\begin{definition}
	We say that the class $[(S_1,T_1)]\in \mathcal{X}_{c_1}$ is \termdefine{above} $[(S_2,T_2)] \in \mathcal{X}_{c_2}$ if $[(S_1^c,T_1^c)] \wedge [(S_2,T_2)]$ is trivial. In this case, we also say that $[(S_2,T_2)]$ is \termdefine{below} $[(S_1,T_1)]$.
\end{definition}

Note that if $[(S,T)]$ is a nontrivial class then by Claim \ref{clm:nearorfar} $|S \cup T| \geq k/10$.

\begin{observation}\label{obs:large_difference}
	If $[(S_1,T_1)]$ is not below $\lt[(S_2,T_2)\rt]$, then for every $(S',T') \in [(S_1,T_1)] \wedge [(S_2,T_2)]$, we have $|S_1 \cup T_1| - |S'\cup T'| \geq k/20$.
\end{observation}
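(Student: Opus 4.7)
The plan is a direct unwinding of the definitions. The assumption that $[(S_1, T_1)]$ is not contained in $[(S_2, T_2)]$ means, by the definition of containment applied with the roles reversed, that
\[
[(S_2^c, T_2^c)] \cap [(S_1, T_1)] = [(S_1 \cap S_2^c,\; T_1 \cap T_2^c)]
\]
is non-trivial. Applying the note preceding the observation---that any non-trivial class has $|S \cup T| \geq k/10$---to this intersection class yields
\[
|S_1 \cap S_2^c| + |T_1 \cap T_2^c| \;=\; |S_1 \setminus S_2| + |T_1 \setminus T_2| \;\geq\; k/10.
\]

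Next, since $S_1 \cup T_1$ and $(S_1 \cap S_2) \cup (T_1 \cap T_2)$ are each disjoint unions across the bipartition, the elementary identity $|S_1| - |S_1 \cap S_2| = |S_1 \setminus S_2|$ (and its $Y$-side analogue) gives
\[
|S_1 \cup T_1| - |(S_1 \cap S_2) \cup (T_1 \cap T_2)| = |S_1 \setminus S_2| + |T_1 \setminus T_2| \geq k/10.
\]
The hypothesis $(S',T') \in [(S_1 \cap S_2,\; T_1 \cap T_2)]$ means these two cuts are $(\varepsilon k/100)$-close in the metric $d$, so in particular
\[
\bigl| |S' \cup T'| - |(S_1 \cap S_2) \cup (T_1 \cap T_2)| \bigr| \;\leq\; \varepsilon k/100.
\]
Combining with the previous display yields
\[
|S_1 \cup T_1| - |S' \cup T'| \;\geq\; \frac{k}{10} - \frac{\varepsilon k}{100} \;\geq\; \frac{k}{20},
\]
where the last inequality uses $\varepsilon = o(1)$.

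I do not foresee any real obstacle; the argument is essentially definition chasing. The one point that deserves a sanity check is that the note preceding the observation legitimately applies to the intersection class: if $(S_i, T_i)$ is $c_i$-internal with $c_1 + c_2 = O(1/\delta^2)$ (the standing hypothesis of this section), then $(S_1 \cap S_2^c,\; T_1 \cap T_2^c)$ is $(c_1+c_2)$-internal, and since $k = \omega(n / \log^{1/3} n)$ gives $1/\delta^2 = o(\log^{2/3} n)$, the ``near'' cutoff $40(c_1+c_2) n / \log n$ in Claim \ref{clm:nearorfar} is $o\bigl(n/\log^{1/3} n\bigr)$, which is safely below the equivalence threshold $\varepsilon k / 100 = n/(100 \log^{1/3} n)$. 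Hence the near/far dichotomy of Claim \ref{clm:nearorfar} does cleanly separate trivial from non-trivial classes, and non-triviality legitimately upgrades the lower bound from $\varepsilon k/100$ to $k/10$.
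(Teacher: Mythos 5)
Your proof is correct and follows essentially the same route as the paper's: non-containment gives a non-trivial class $[(S_1\setminus S_2, T_1\setminus T_2)]$, hence $|S_1\setminus S_2|+|T_1\setminus T_2|\geq k/10$ by Claim \ref{clm:nearorfar}, and the disjoint-union identity plus the $(\varepsilon k/100)$-closeness of $(S',T')$ to $(S_1\cap S_2, T_1\cap T_2)$ finishes the argument. Your version is in fact slightly more explicit than the paper's about the final $k/10 - \varepsilon k/100 \geq k/20$ step and about why the near/far dichotomy applies to the intersection class.
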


\begin{proof}
	Since $\lt[(S_1,T_1)\rt]$ is not below $[(S_2,T_2)]$, it holds that $[(S_1\setminus S_2, T_1\setminus T_2)]$ is nontrivial, and therefore $|S_1\setminus S_2|+|T_1\setminus T_2| \geq k/10$. Note that $S_1$ is the disjoint union of $S_1\cap S_2$ and $S_1 \setminus S_2$, and that a similar statement holds for $T_1$. This implies that $|S_1\cup T_1| \geq |S_1 \cap S_2| + |T_1 \cap T_2| + k/10$. As $(S',T')$ is equivalent to $(S_1\cap S_2,T_1\cap T_2)$, the observation follows.
\end{proof}

We now construct the building blocks used to create the cuts of Lemma \ref{lem:structure}.
Consider the following process:
\begin{algorithm}\label{alg:atom constructor}\hfill
\begin{enumerate}
	\item Initialize $[(S_1,T_1)]\in \mathcal{X}_1$ to be an arbitrary nontrivial internal equivalence class.
	
	\item As long as there exists a class $[(S^*,T^*)] \in \mathcal{X}_1$ that is neither above, nor has trivial meet with, $[(S_i,T_i)]$, set $[(S_{i+1},T_{i+1})]=[(S_i,T_i)] \wedge [(S^*,T^*)]$.
\end{enumerate}
\end{algorithm}

Note that this process halts after at most $40 / \delta$ steps. This is because $|S_1\cup T_1| \leq 2n$, and Observation \ref{obs:large_difference} implies that for each $i$, $|S_i \cup T_i| \leq \left| S_{i-1} \cup T_{i-1} \right| - k/20$. In consequence, the equivalence classes obtained at the end of this process are $(40/\delta)$-internal, because the meet of a $c$-internal cut with a $1$-internal cut is $(c+1)$-internal. We may therefore think of these equivalence classes as members of $\mathcal{X}_{40/\delta}$. We call these classes \termdefine{atoms}. 

\begin{observation}\label{obs:atom properties}\leavevmode

	\begin{enumerate}
		
		\item\label{itm:nontrivial} All of the atoms are nontrivial.
		
		\item\label{itm:disjoint} Every pair of atoms has trivial meet.
		
		\item\label{itm:atom count} There are at most $30 / \delta$ atoms.
	\end{enumerate}
\end{observation}

\begin{proof}
	Item \ref{itm:nontrivial} holds because at each stage of the process, $[(S_i,T_i)]$ and $[(S^*,T^*)]$ have nontrivial meet.
	
	For Item \ref{itm:disjoint}, assume that $[(S,T)]$ and $[(S',T')]$ are distinct atoms. By definition of the equivalence relation, $d((S,T),(S',T')) \geq \eps k/100$. By Claim \ref{clm:nearorfar} this implies that in fact $d((S,T),(S',T')) \geq k/10$. Without loss of generality, we may assume that ${|S\setminus S'|} + {|T \setminus T'|} \geq k / 20$. Now, $[(S',T')]$ is the meet of at most $40/\delta$ 1-internal classes. Thus $(S',T')$ is $\varepsilon k/100$-close to the intersection of at most $40/\delta$ $1$-internal cuts. Therefore at least one of these classes, $[(S^*,T^*)]$, satisfies 
	\[
	|S \setminus S^*|+|T \setminus T^*| \geq \left( \frac{k}{20} + \frac{\varepsilon k}{100} \right) \cdot \frac{\delta}{40}.
	\]
	Since $k = \omega \left( \frac{n}{\log^{1/3} n} \right)$, this is larger than $\frac{40}{\delta} \cdot \frac{40 n}{\log n}$. Therefore, by Claim \ref{clm:nearorfar}, this implies that $[(S,T)]\wedge [(S^{*c},T^{*c})]$ is nontrivial, and therefore $[(S^*,T^*)]$ is not above $[(S,T)]$. Suppose, for a contradiction, that $[(S,T)]$ and $[(S',T')]$ have nontrivial meet. Then $[(S,T)]$ and $[(S^*,T^*)]$ also have nontrivial meet. Therefore $[(S,T)]$, by definition, is not an atom, which is a contradiction.
	
	For Item \ref{itm:atom count}, fix a representative for each atom. Each representative contains at least $k/10$ vertices. Since each pair of atoms has trivial meet, by Claim \ref{clm:nearorfar}, the intersection of any two representatives has at most $\frac{1600n^2}{k \log n}$ vertices. Letting $A$ denote the number of atoms, the inclusion-exclusion formula implies that for all $a \leq A$:
	\[
	a \cdot \lt( \frac{k}{10}\rt) - \binom{a}{2} \frac{1600n^2}{k \log n} \leq |V(G)| = 2n.
	\]
	The inequality does not hold for $a = 30/\delta$, and therefore $A \leq 30/\delta$.
\end{proof}

\begin{claim}\label{clm:atom internal intersection}
	Suppose $[(S,T)]$ is internal and above a nontrivial $O(1/\delta^2)$-internal class $[(S',T')]$. Then there exists an atom $[(S_A,T_A)]$ below $[(S,T)]$ such that $[(S_A,T_A)] \wedge [(S',T')]$ is nontrivial.
\end{claim}

\begin{proof}
	We will construct $[(S_A,T_A)]$ using Algorithm \ref{alg:atom constructor}, while ensuring throughout that the meet $[(S_i,T_i)] \wedge [(S',T')]$ is nontrivial. Here, $[(S_i,T_i)]$ refers to the sequence of cuts constructed in Algorithm \ref{alg:atom constructor}.
	
	Set $[(S_1,T_1)] = [(S,T)]$. Suppose that at step $i \leq 40/\delta$ we have a class $[(S_i,T_i)]$ such that $[(S_i,T_i)] \wedge [(S',T')]$ is nontrivial. If $[(S_i,T_i)]_{40/\delta}$ is an atom take $[(S_A,T_A)] = [(S_i,T_i)]_{40/\delta}$. Otherwise, there exists an internal cut $[(S^*,T^*)]$ that is neither above, nor has trivial meet with, $[(S_i,T_i)]$. Therefore both $[(S_i,T_i)] \wedge [(S^*,T^*)]$ and $[(S_i,T_i)] \wedge [(S^*,T^*)]^c \coloneqq [((S^*)^c,(T^*)^c)]$ are nontrivial.
	
	Additionally, since $[(S_i,T_i)] \wedge [(S',T')]$ is nontrivial, it holds that at least one of $[(S^*,T^*)] \wedge [(S_i,T_i)] \wedge [(S',T')]$ and $[(S^*,T^*)]^c \wedge [(S_i,T_i)] \wedge [(S',T')]$ are nontrivial. Without loss of generality assume that $[(S^*,T^*)] \wedge [(S_i,T_i)] \wedge [(S',T')]$ is nontrivial and set $[(S_{i+1},T_{i+1})] = [(S^*,T^*)] \wedge [(S_i,T_i)]$. Observe that $[(S_{i+1},T_{i+1})] \wedge [(S',T')]$ is nontrivial and $O(1/\delta^2)$-internal.
	
	Note that although it may look like the distance between $(S_i,T_i)$ and the trivial cut is halved at each step, in fact Claim \ref{clm:nearorfar} implies that the distance is never less than $k/10$. This implicitly uses the fact that $[(S_i,T_i)] \wedge [(S',T')]$ is $O\left(1/\delta^2\right)$-internal.
\end{proof}

\begin{proof}[Proof of Lemma \ref{lem:structure}]
	\newcommand{\mS}{{\mathcal{S}}}
	
	We first construct the cuts $(S_1,T_1),\ldots,(S_m,T_m)$. Let $\mA$ be the set of atoms. For each $\alpha \in \mA$, fix a representative $\left( S_\alpha,T_\alpha \right)$. For $\mS \subseteq \mA$, let $\left(S'_\mS,T'_\mS \right)$ be the cut $\left( \cup_{\alpha \in \mS} S_\alpha , \cup_{\alpha \in \mS} T_\alpha \right)$. Finally, define the sets:
	\[
	S_\mS \coloneqq \left\{ x \in X : e_G \left( \{x\}, T_\mS' \right) \geq \frac{k}{2} \right\}, T_\mS \coloneqq \left\{ y \in Y : e_G \left( \{y\}, S_\mS' \right) \geq \frac{k}{2} \right\}.
	\]
	
	Let $(S_1,T_1),\ldots,(S_m,T_m)$ be a list of the cuts $\left\{ \left( S_\mS , T_\mS \right) \right\}_{\mS \subseteq \mA}$. By Observation \ref{obs:atom properties}, $m \leq 2^{\left| \mA \right|} = 2^{O(n/k)}$. It remains to prove that in each of these cuts, every vertex is incident to few (i.e., less than $(1 + \eps) k/2$) cross edges, and that every internal cut $(S,T)$ with $|S|>|T|$ is $\eps k$-close to one of the $(S_i,T_i)$s.
	
	Let $\mS \subseteq \mA$. Since, by Observation \ref{obs:atom properties}, every atom is $(40/\delta)$-internal, the number of cross edges with respect to $\left(S'_\mS,T'_\mS \right)$ is at most $|\mA| \frac{40}{\delta} \frac{4nk}{\log n} = O \left( \frac{n^3}{k \log n} \right)$. Therefore there are at most $\varepsilon k/2$ vertices $x \in V(G)$ s.t.\ $\deg1_{G,S'_\mS,T'_\mS}^{\mathrm{Cr}} (x) > k / 2$. Thus $d \left( \left( S'_\mS, T'_\mS \right) , \left( S_\mS , T_\mS \right) \right) \leq \varepsilon k / 2$. Let $x \in V(G)$. By construction,
	\[
	\deg1_{G,S_\mS,T_\mS}^{\mathrm{Cr}} (x) \leq \frac{k}{2} + d \left( \left( S'_\mS, T'_\mS \right) , \left( S_\mS , T_\mS \right) \right) \leq \left( 1 + \eps \right) \frac{k}{2},
	\]
	as desired.
	
	For the second property, let $(S,T)$ be an internal cut. Let $\mS = \{ \alpha_1,\ldots,\alpha_m\}$ be the set of atoms below $\left[ (S,T) \right]$. We will show that $(S,T)$ is equivalent to $(S_\mS, T_\mS)$. By the triangle inequality it suffices to show that $\left( S_\mS',T_\mS' \right)$ is equivalent to $(S,T)$.
	
	Suppose, for a contradiction, that $\left( S_\mS',T_\mS' \right)$ is not equivalent to $(S,T)$. Define
	\[
	[(S',T')] = [(S,T)] \wedge \left[ \left( S_\mS',T_\mS' \right) \right]^c = \left[ (S,T) \cap \left( \bigcap_{\alpha \in \mS} \left( S_\alpha, T_\alpha \right)^c \right) \right].
	\]
	We observe that $[(S,T)]$ is internal and above $[(S',T')]$. Additionally, $[(S',T')]$ is nontrivial. Indeed, by assumption, $(S,T)$ is not equivalent to $(S_\mS',T_\mS')$. Furthermore, by construction, $[(S,T)]$ is above $[(S_\mS',T_\mS')]$. These facts together imply that $[(S',T')]$ is nontrivial. Hence, by Claim \ref{clm:atom internal intersection}, there exists an atom $\alpha$ that is below $[(S,T)]$ (i.e., $\alpha \in \mS$) and has nontrivial meet with $[(S',T')]$. However, by construction, $[(S',T')]$ has trivial meet with all atoms in $\mS$. Thus $\alpha \notin \mS$, a contradiction.
\end{proof}

\section{Properties of Random Subgraphs}\label{sec:pseudorandom}

Let $k = \delta n$, with $\delta = \omega( \log^{-1/3} n)$, and fix a $k$-regular bipartite graph $G = (X \dcup Y,E)$ on $2n$ vertices.
In this section we collect properties of random subgraphs of $G$ that are essential for our proof.

Set
\begin{align*}
&p_1 = \frac{\log n - \log \log \log \log n}{k},\\
&p_2 = \frac{\log n + \log \log \log \log n}{k}.
\end{align*}
We define the following random subgraphs of $G$:
\begin{align*}
&G_2 \sim G(p_2),\\
&G_1 \sim G_2 \left( \frac{p_1}{p_2} \right).
\end{align*}
Observe that $G_1 \sim G(p_1)$. Furthermore, the same distribution on $(G_1,G_2)$ can be obtained as follows. Let $G_1 \sim G(p_1)$, $G' \sim G \lt(\frac{p_2 - p_1}{1 - p_1} \rt)$, and set $G_2 = G_1 \cup G'$.

We will show presently that a.a.s.\ $G_1$ contains isolated vertices,  while $G_2$ does not. Furthermore, the distance between any two vertices that are isolated in $G_1$ is at least $2$ in $G_2$. This motivates the following construction: let $G_1 \subseteq G_H \subseteq G_2$ be the random graph obtained by adding, for each isolated vertex $v$ in $G_1$, an edge drawn uniformly at random from $\{e \in E(G_2) : v \in e \}$. If any of these sets are empty, or if there are two isolated vertices in $G_1$ that are connected in $G_2$, set $G_H = G_1$. The next claim, a variation of \cite[Lemma 7.9]{bollobas1985random}, establishes that it is sufficient to prove that a.a.s.\ $G_H$ contains a perfect matching.

\begin{claim}\label{clm:blue-green}
	Let $Q$ be a monotone increasing property of subgraphs of $G$. If $Q$ holds a.a.s.\ for $G_H$ then, in almost every graph process in $G$, $Q$ holds for $G_{\tau_I}$, the first graph in which there are no isolated vertices.
\end{claim}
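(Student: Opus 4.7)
My approach is to couple the random graph process $\tilde G$, the random subgraphs $G_1 \sim G(p_1)$ and $G_2 \sim G(p_2)$, and the auxiliary graph $G_H$ on a common probability space so that, on a likely event, $G_H \subseteq G_{\tau_I}$. Monotonicity of $Q$ will then transfer the property. Concretely, draw i.i.d.\ variables $\{U_e\}_{e\in E(G)}$ uniform on $[0,1]$ and let $\tilde G$ order the edges by increasing $U_e$; setting $G_i := \{e : U_e \leq p_i\}$ for $i=1,2$ realises $G_1 \subseteq G_2$ with the correct marginals inside a uniformly random graph process.

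Next, I would introduce the good event
\[
F = \{G_1 \text{ has an isolated vertex}\}\cap\{G_2 \text{ has no isolated vertex}\}\cap\{\text{no two }G_1\text{-isolated vertices are adjacent in }G_2\}
\]
and verify $\Prob[F^c] = \oone$ by standard first-moment estimates; the choices of $p_1$ and $p_2$ are precisely calibrated so that the expected number of isolated vertices is $\omega(1)$ in $G_1$ and $\oone$ in $G_2$, while the expected number of $G_2$-edges joining two $G_1$-isolated vertices vanishes as $n \to \infty$.

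Let $I$ denote the set of $G_1$-isolated vertices and, for each $v \in I$, let $e_v^\ast$ be the edge incident to $v$ that minimises $U_e$. On $F$, $e_v^\ast \in G_2$ and the edges $\{e_v^\ast\}_{v \in I}$ are distinct. Since every vertex outside $I$ is already non-isolated at time $|E(G_1)|$, the vertex $v \in I$ ceases to be isolated in $\tilde G$ exactly when $e_v^\ast$ is added, so $\tau_I = \max_{v \in I}\pi(e_v^\ast)$, where $\pi(e)$ denotes the position of $e$ in the process. In particular,
\[
G_1 \cup \{e_v^\ast : v \in I\} \subseteq G_{\tau_I}.
\]

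The crux is the distribution matching. Conditional on $(G_1,G_2)$ and on $F$, for each $v \in I$ the variables $\{U_e : v \in e,\, e \in G_2\}$ are i.i.d.\ uniform on $(p_1,p_2]$, so $e_v^\ast$ is uniform on $\{e \in E(G_2) : v \in e\}$; by the third clause of $F$ these index sets are pairwise disjoint across $v \in I$, yielding independence. This is exactly the conditional law used to construct $G_H$ on its good case, so we may redefine $G_H := G_1 \cup \{e_v^\ast : v \in I\}$ on $F$ (and $G_H := G_1$ off $F$) without altering its distribution. Monotonicity of $Q$ then gives
\[
\Prob[Q(G_{\tau_I})^c] \leq \Prob[F^c] + \Prob[Q(G_H)^c,\,F] \leq \Prob[F^c] + \Prob[Q(G_H)^c] = \oone,
\]
as required. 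The main obstacle is this distribution-matching step, which is precisely what the event $F$ is engineered to enable; everything else is routine bookkeeping.
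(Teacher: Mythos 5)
Your proposal is correct and follows essentially the same route as the paper: couple the process, $G_1$, $G_2$, and $G_H$ via i.i.d.\ uniforms on the edges, observe that on the good event the minimal-$U$ edges at the $G_1$-isolated vertices reproduce the law of $G_H$ and are all present by time $\tau_I$, and conclude by monotonicity. Your version in fact spells out the distribution-matching and containment steps that the paper's proof leaves implicit.
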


We defer the proof until after establishing some properties of $G_1$ and $G_2$.

\begin{claim}\label{lem:isolated}
	A.a.s.\ $G_1$ contains isolated vertices and $G_2$ does not. Furthermore, a.a.s.\ there is no pair $x,y$ of vertices that are isolated in $G_1$ and $xy \in E(G_2)$.
\end{claim}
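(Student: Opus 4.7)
The proof is a standard first/second moment calculation combined with the fact that $G$ is $k$-regular, so I will outline the three statements in turn and then flag the one step that needs genuine care.

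First, observe that every vertex has degree exactly $k$, so if $Y_v$ is the indicator that $v$ is isolated in $G_i$, then $\E[Y_v] = (1-p_i)^k$. The plan is to exploit the specific choice of $p_1,p_2$: using $1-p \approx e^{-p}$ (which is valid here since $p_i = O(\log n/k) = o(1)$) one gets
\[
(1-p_1)^k = (1+\oone)\frac{\log\log\log n}{n}, \qquad (1-p_2)^k = (1+\oone)\frac{1}{n\log\log\log n}.
\]
Hence letting $Y$ be the number of isolated vertices in $G_2$, $\E[Y] = 2n(1-p_2)^k = \Theta(1/\log\log\log n) \to 0$, and Markov's inequality yields a.a.s.\ $Y = 0$, proving the ``$G_2$ has no isolated vertex'' half of the first statement.

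For the existence of isolated vertices in $G_1$, let $X = \sum_{v} I_v$ where $I_v$ indicates $v$ being isolated in $G_1$. Then $\E[X] = 2n(1-p_1)^k = (2+\oone)\log\log\log n \to \infty$. I will use Chebyshev. Because $G$ is bipartite, for two distinct vertices $u,v$ we have $\mathrm{Cov}(I_u,I_v) = 0$ if $uv \notin E(G)$ (the edges incident to $u,v$ are disjoint and independent), and $\mathrm{Cov}(I_u,I_v) = (1-p_1)^{2k-1}p_1$ if $uv \in E(G)$ (exactly one edge is shared). Summing over the $2nk$ ordered adjacent pairs and adding the diagonal contribution gives
\[
\var(X) = \E[X](1-(1-p_1)^k) + 2nk\, p_1 (1-p_1)^{2k-1} = (1+\oone)\E[X],
\]
since the second term is $O(\log n\cdot(\log\log\log n)^2/n) = o(1)$. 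Chebyshev then gives $\Prob[X=0] \leq \var(X)/\E[X]^2 = O(1/\log\log\log n) \to 0$.

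Finally, for the bad-pair statement, I will use the coupling already described in the paper: realize $G_2 = G_1 \cup G'$ where $G' \sim G((p_2-p_1)/(1-p_1))$ is independent of $G_1$. For a fixed edge $xy \in E(G)$, the event that $x,y$ are both isolated in $G_1$ forces $xy \notin E(G_1)$, so $xy \in E(G_2)$ must come from $G'$. The events ``all $2k-1$ edges incident to $\{x,y\}$ are absent from $G_1$'' and ``$xy \in E(G')$'' are independent, so
\[
\Prob[x,y\text{ isolated in }G_1,\ xy\in E(G_2)] = (1-p_1)^{2k-1}\cdot\frac{p_2-p_1}{1-p_1} = (1-p_1)^{2k-2}(p_2-p_1).
\]
Summing over the $nk$ edges and plugging in the estimates above yields an expected number of bad pairs equal to $(2+\oone)(\log\log\log n)^2(\log\log\log\log n)/n = \oone$, so Markov finishes the argument.

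The only part that takes any real thought is the second-moment computation for $X$: one must check that the dominant contribution to $\var(X)$ comes from the diagonal $u=v$ terms rather than from the $2nk$ covariance terms arising from adjacent vertex pairs. This is where the choice $p_1 \sim \log n/k$ is exactly tight, since the adjacent-pair contribution is of order $kp_1 \cdot (\E[X])^2/n = \Theta(\log n\cdot(\log\log\log n)^2/n)$, which is $o(1)$ even when $\E[X]$ grows only like $\log\log\log n$ and regardless of where $k$ lies in the regime $\omega(n/\log^{1/3}n) \leq k \leq n$. Once this is verified the rest is bookkeeping.
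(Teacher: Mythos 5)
Your proof is correct and follows essentially the same route as the paper (first moment for $G_2$ and for the bad pairs, second moment for $G_1$); the paper avoids your covariance computation by counting isolated vertices on one side of the bipartition only, where the indicators are exactly independent, but your full second moment works just as well and your coupling argument for the bad pairs is, if anything, more explicit than the paper's. One bookkeeping slip: since $p_1 = (\log n - \log\log\log\log n)/k$, the estimates should read $(1-p_1)^k = (1+\oone)\frac{\log\log\log\log n}{n}$ and $(1-p_2)^k = (1+\oone)\frac{1}{n\log\log\log\log n}$ (four iterated logarithms, not three), which changes nothing in the conclusions.
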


\begin{proof}
	The probability that a specific vertex is isolated in $G(p)$ is $(1-p)^k$. The expected number of isolated vertices in $G_2$ is therefore:
	\[
	2n(1-p_2)^k \leq 2n \exp \left( - p_2 k \right) = 2n \frac{1}{\omega (n)} = \oone.
	\]
	Applying Markov's inequality, a.a.s.\ $G_2$ contains no isolated vertices.
	
	By a similar calculation, the probability that a specific vertex is isolated in $G_1$ is $\omega (1/n)$. Let the random variable $I$ be the number of vertices in $X$ that are isolated in $G_1$. Then $\E \left[ I \right] = \omegaone$. Furthermore, the events that two vertices $x,y \in X$ are each isolated in $G_1$ are independent. Thus $\var \left[ I \right] \leq \E \lt[ I \rt]$, and by Chebychev's inequality, a.a.s.\ $I>0$ and $G_1$ contains isolated vertices.
	
	For the second part of the claim, observe that by the calculations above a.a.s.\ the number of isolated vertices in $G_1$ is $O(\log n)$. Therefore the expected number of edges between these vertices in $G_2$ is $\oone$, and so by Markov's inequality a.a.s.\ there are none.
\end{proof}

\begin{proof}[Proof of Claim \ref{clm:blue-green}]
We describe a coupling that relates $G_1, G_2$, and $G_H$ to $G_{\tau_I}$. Consider the following random process. For each edge $e$ of $G$, choose a real number $\alpha_e \sim U[0,1]$ uniformly at random from the interval $[0,1]$, all choices independent. Let $G'_1$ and $G'_2$ be the subgraphs of $G$ whose edges are $E(G'_i) = \{e \in E(G):\alpha_e \leq p_i\}$ for $i \in \{1,2\}$. Let $G'_H$ be the random graph obtained by adding, for each isolated vertex in $G'_1$, the edge incident to it in $G'_2$ whose $\alpha$ value is minimal. If $G'_2$ contains isolated vertices or an edge between two vertices that are isolated in $G'_1$, set instead $G'_H = G'_1$.

Observe that the distributions of $(G_1,G_2,G_H)$ and $(G'_1,G'_2,G'_H)$ are identical. Furthermore, the distribution of the random graph process is identical to that of the process in which edges are revealed in increasing order of $\alpha$. With respect to this process, a.a.s. $G'_H$ is a subgraph of $G_{\tau_I}$. Therefore, if $Q$ holds a.a.s.\ for $G_H$, then $Q$ holds a.a.s.\ for $G'_H$, and as $Q$ is monotone increasing, a.a.s.\ $G_{\tau_I} \in Q$.
\end{proof}

For a cut $(S,T)$ we define the set
\[
\Gamma \left( S,T \right) = \left\{ x \in V : \deg1_{G,S,T}^{\mathrm{Cr}} (x) \geq n^{-1/20} k \right\}
\]
of vertices with high cross degree. We remind the reader that a sequence of events occurs with very high probability (w.v.h.p.) if the probabilities of their non-occurence decay at a polynomial rate.

\begin{lem}
	Let $(S,T)$ be a cut. Suppose $R \subseteq V \setminus \Gamma (S,T)$ is a set of $O(\log n)$ isolated vertices in $G_1$. Then w.v.h.p.\ for every $x \in R$, $\deg1_{G_H}^{\mathrm{Cr}}(x) = 0$.
\end{lem}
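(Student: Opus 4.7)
The plan is to show that, with very high probability, $G_2$ contains no cross edges incident to any vertex in $R$. Since each edge added to $G_H$ at a vertex $x \in R$ (if any) is chosen from the $G_2$-edges at $x$, this will immediately give $\deg1_{G_H}^{\text{Cr}}(x) = 0$ for every $x \in R$.

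First I would invoke the standard coupling already described in Section \ref{sec:pseudorandom}: $G_2 = G_1 \cup G'$, where $G' \sim G \left( \frac{p_2 - p_1}{1 - p_1} \right)$ is independent of $G_1$. In particular, conditional on $G_1$ and on the event that a vertex $x$ is isolated in $G_1$, each edge of $G$ incident to $x$ is present in $G_2$ independently with probability $q := \frac{p_2 - p_1}{1 - p_1} = \Theta \left( \frac{\log \log \log \log n}{k} \right)$.

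Now fix $x \in R$. Since $x \notin \Gamma(S,T)$, we have $c_x := \deg1_{G,S,T}^{\text{Cr}}(x) < n^{-1/20} k$. Conditionally on $x$ being isolated in $G_1$, the number of cross $G_2$-edges at $x$ is $\mathrm{Bin}(c_x, q)$, whose expectation is at most $c_x q = O \left( n^{-1/20} \log \log \log \log n \right) = n^{-1/20 + o(1)}$. By Markov's inequality, the probability that such a cross edge exists is $n^{-1/20 + o(1)}$; a union bound over the $|R| = O(\log n)$ vertices of $R$ then yields failure probability $n^{-1/20 + o(1)}$, which decays polynomially, so w.v.h.p.\ no cross edge of $G_2$ is incident to $R$.

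On this event every candidate edge for $G_H$ at any $x \in R$ is parallel, so $\deg1_{G_H}^{\text{Cr}}(x) = 0$ as required. I do not expect a real obstacle here: the only subtle point is the conditioning, where one must use that $G_2 \setminus G_1$ is independent of $G_1$, so that conditioning on which vertices are isolated in $G_1$ does not bias the extra edges available at them. The remainder is a routine Markov-plus-union-bound estimate, with the slack $n^{-1/20}$ coming entirely from the defining inequality of $\Gamma(S,T)$.
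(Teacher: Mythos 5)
Your proposal is correct and follows essentially the same route as the paper: reduce to showing $G_2$ has no cross edges at $R$, bound the expected number of such edges by $n^{-1/20}k\cdot\frac{p_2-p_1}{1-p_1}=\tilde{O}(n^{-1/20})$ using that $x\notin\Gamma(S,T)$ and that the edges beyond $G_1$ arrive independently with probability $\frac{p_2-p_1}{1-p_1}$, then apply Markov and a union bound over the $O(\log n)$ vertices. Your explicit remark about the independence of $G_2\setminus G_1$ from $G_1$ makes the conditioning cleaner than the paper's terser phrasing, but the argument is the same.
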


\begin{proof}
	It suffices to show that a.a.s.\  for every $x \in R$, $\deg1_{G_2}^{\mathrm{Cr}}(x) = 0$. Indeed, the expected number of cross edges incident to $x$ in $G_2$ is bounded above by $n^{-1/20}k \frac{p_2 - p_1}{1-p_1} = \tilde{O} \lt(n^{-1/20} \rt)$. The conclusion follows by applying Markov's inequality and a union bound over the $O(\log n)$ vertices.
\end{proof}

\begin{lem}\label{lem:low cross degree}
	Let $(S,T)$ be a cut, and let $V' = V \setminus \Gamma (S,T)$. Then, w.v.h.p.\ for every $x \in V'$, $\deg1_{G_1}^{\mathrm{Cr}} (x) \leq 30$.
\end{lem}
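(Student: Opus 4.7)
The plan is to apply a direct binomial upper-tail bound to each vertex in $V'$ and then union bound over all such vertices. For any fixed $x \in V'$, the random variable $\deg_{G_1}^{\text{Cr}}(x)$ is binomially distributed with parameters $N := \deg_{G,S,T}^{\text{Cr}}(x)$ and $p_1$. By the definition of $\Gamma(S,T)$ we have $N < n^{-1/20}k$, so the mean of this binomial is at most
\[
n^{-1/20}k \cdot p_1 \;=\; n^{-1/20}\bigl(\log n - \log\log\log\log n\bigr) \;=\; \tilde{O}\bigl(n^{-1/20}\bigr) = o(1).
\]

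First, I would use the standard estimate $\Pr[\mathrm{Bin}(N,p) \geq t] \leq \binom{N}{t} p^t \leq (eNp/t)^t$ with $t = 31$. This gives, for each $x \in V'$,
\[
\Pr\bigl[\deg_{G_1}^{\text{Cr}}(x) \geq 31\bigr]
\;\leq\; \left( \frac{e \, n^{-1/20}\log n\,(1+o(1))}{31} \right)^{31}
\;\leq\; n^{-31/20 + o(1)}.
\]

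Next, I would take a union bound over the at most $2n$ vertices in $V'$, obtaining a total failure probability at most $2n \cdot n^{-31/20 + o(1)} = n^{-11/20 + o(1)}$. Since the logarithm of this quantity is $-\Omega(\log n)$, this matches the definition of w.v.h.p.\ and the lemma follows.

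There is no real obstacle here: the argument is entirely analogous to the preceding (Markov-based) lemma, with the difference that a constant threshold of $30$ requires a $31$-st moment computation rather than a first-moment bound. The specific constant $30$ in the statement is simply chosen so that $n^{-t/20}$ beats the union-bound factor of $n$ with polynomial slack; any constant strictly greater than $20$ would suffice, and the extra margin comfortably absorbs the $\mathrm{polylog}(n)$ factors coming from $p_1$.
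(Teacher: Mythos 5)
Your proof is correct and is essentially the paper's argument: both observe that $\deg1_{G_1}^{\text{Cr}}(x)\sim \mathrm{Bin}\left(\deg1_G^{\text{Cr}}(x),p_1\right)$ with $\deg1_G^{\text{Cr}}(x)< n^{-1/20}k$, bound the upper tail by $\binom{N}{t}p_1^t$ (the paper with $t=30$, yielding $\tilde{O}(n^{-3/2})$), and finish with a union bound over $O(n)$ vertices.
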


\begin{proof}
	Observe that $\deg1_{G_1}^{\mathrm{Cr}} (x) \sim Bin \left( \deg1_{G}^{\mathrm{Cr}} (x) , p_1 \right)$. Therefore:
	\[
	\Prob \left[ \deg1_{G_1}^{\mathrm{Cr}} (x) \geq 30 \right] \leq \binom{n^{-1/20} k}{30} p_1^{30} = \tilde{O} \left( \frac{1}{n^{3/2}} \right).
	\]
	The lemma follows by applying a union bound over all $O(n)$ vertices in $V'$.
\end{proof}

\begin{lem}\label{lem:large_dist}
	Let $(S,T)$ be a cut, and let $V_{\mathrm{low}}$ be the set of vertices $x \in V \setminus \Gamma (S,T)$ such that $\deg1_{G_1}^{\mathrm{Par}} (x) \leq \ldc \log n$. W.v.h.p.\ the following hold:
	\begin{enumerate}
		\item $|V_{\mathrm{low}}|\leq n^{0.01}$.
		
		\item\label{itm:dist} For each $x,y \in V_{\mathrm{low}}$, the distance between $x$ and $y$ in $G_H$ is at least $6$.
	\end{enumerate}
\end{lem}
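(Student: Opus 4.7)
The plan is to derive both claims from a single Chernoff bound on parallel degrees. Fix $u \in V \setminus \Gamma(S,T)$. By definition $\deg_G^{\text{Cr}}(u) \leq n^{-1/20} k$, so $\deg_G^{\text{Par}}(u) \geq (1 - n^{-1/20}) k$, and $\deg_{G_1}^{\text{Par}}(u)$ is binomially distributed with mean $(1 - o(1)) \log n$. A multiplicative Chernoff bound for the lower tail at the ratio $1/1000$ gives $\Prob[u \in V_{low}] \leq n^{-\alpha}$ for some $\alpha > 0.99$ (numerically, the Kullback--Leibler rate at this ratio is approximately $0.992$). Linearity of expectation then yields $\E[|V_{low}|] = O(n^{1-\alpha}) = n^{o(1)}$, and Markov's inequality delivers Item~(1) with probability $1 - n^{-\Omega(1)}$.

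For Item~(2), I would union bound over triples $(u,v,P)$, where $u \neq v$ lie in $V \setminus \Gamma(S,T)$ and $P$ is a path of length $\ell \in \{1,\dots,5\}$ in $G$ connecting them; a crude estimate bounds the number of such triples by $2n k^\ell$ for each fixed $\ell$. Since $G_H \subseteq G_2$, we have $\Prob[P \subseteq G_H] \leq p_2^\ell$. The heart of the argument is to condition on $P \subseteq G_2$ and bound $\Prob[u,v \in V_{low} \mid P \subseteq G_2]$. By the independence of the edge indicators, conditioning on $P \subseteq G_2$ leaves edges of $G$ outside $P$ in $G_1$ independently with probability $p_1$, while making the edges inside $P$ belong to $G_1$ independently with conditional probability $p_1/p_2$. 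Writing $\deg_{G_1}^{\text{Par}}(u) = Y_u + Z_u$, where $Y_u$ counts the parallel edges of $u$ that lie in $P$ (at most two) and $Z_u$ counts the rest, $Z_u$ is a binomial on at least $(1-o(1))k$ independent trials with parameter $p_1$, and the same Chernoff estimate as above gives $\Prob[Z_u \leq \tfrac{1}{1000}\log n \mid P \subseteq G_2] \leq n^{-\alpha}$. The underlying edge sets of $Z_u$ and $Z_v$ overlap in at most one edge (the potential edge $uv$), so
\[
\Prob[u,v \in V_{low} \mid P \subseteq G_2] \leq n^{-2\alpha + o(1)}.
\]

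Combining these bounds and summing,
\[
\E \bigl[ \text{number of bad triples} \bigr] \leq \sum_{\ell=1}^{5} 2n \cdot k^\ell \cdot p_2^\ell \cdot n^{-2\alpha + o(1)} = O\bigl(n^{1-2\alpha} \log^5 n\bigr),
\]
since $k p_2 = O(\log n)$. With $\alpha > 0.99$, this is $n^{-\Omega(1)}$; Markov's inequality then shows that w.v.h.p.\ no bad triple exists, which forces every two distinct vertices of $V_{low}$ to be at distance at least $6$ in $G_H$. The principal obstacle is the correlation between $\{P \subseteq G_H\}$ and $\{u,v \in V_{low}\}$; the remedy is to relax $G_H$ to $G_2$ and exploit the fact that the (few) path edges account for only $O(1)$ out of the $(1 - o(1))k$ parallel edges at each endpoint, so the conditional distributions of $\deg_{G_1}^{\text{Par}}(u)$ and $\deg_{G_1}^{\text{Par}}(v)$ remain sharply concentrated.
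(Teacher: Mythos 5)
Your proposal is correct and follows essentially the same route as the paper: a first-moment/Chernoff bound on the lower tail of the parallel degree for Item (1), and for Item (2) a union bound over pairs and paths of length at most $5$, conditioning on the path lying in $G_2$ and exploiting the independence of the remaining $(1-o(1))k$ parallel edge indicators at each endpoint. The only (harmless) differences are bookkeeping: the paper lumps the two endpoints' parallel degrees into a single binomial on $(1-o(1))2k$ trials, whereas you treat $Z_u$ and $Z_v$ separately and account for the single possibly shared edge $uv$.
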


\begin{proof}
	We first show that w.v.h.p.\ $|V_{\mathrm{low}}|\leq n^{0.01}$. Indeed, suppose $x \in V$ satisfies $\deg1_G^{\mathrm{Cr}} (x) < n^{-1/20}k$. The probability that $x \in V_{\mathrm{low}}$ is at most 
	\begin{align*}
	    \sum_{i=0}^{\ldc \log n} \Pr \left[ \deg1_{G_1}^{\mathrm{Par}} (x) = i\right]
		\leq & \frac{\log n}{1000} \binom{k}{\ldc \log n} p_1^{\ldc \log n}  (1-p_1)^{\lt(1-O(n^{-1/20}) \rt)k}\\
		\leq & \frac{\log n}{1000} \left( \frac{e \cdot k p_1 }{\ldc \log n} \right)^{\ldc \log n}  \exp\left(- \lt(1-O(n^{-1/20}) \rt)k p_1 \right) \\
		\leq & \left(1000 e \right)^{\ldc \log n} \cdot \tilde{O}\left(\frac{1}{n}\right) < n^{-0.991}.
	\end{align*}
	Thus, $\E \left[ |V_{\mathrm{low}}| \right] < n^{0.009}$. Therefore, by Markov's inequality, $\Prob \left[ |V_{\mathrm{low}}| \geq n^{0.01} \right] \leq n^{-0.001}$.
	
	The proof of \ref{itm:dist} is similar to the proof of \cite[Claim 4.4]{ben2011resilience} and property (P2) in \cite[Lemma 5.1.1]{RomanThesis}. Fix two distinct vertices $u,w\in V \setminus \Gamma (S,T)$ and consider a path $(u=v_0,\ldots,v_r=w)$  in $G$, where $1\leq r\leq 5$. Denote by $\mA$ the event that for every $0 \leq i \leq r-1$, we have $\{v_i, v_{i+1}\}\in E(G_2)$, i.e., the path exists in $G_2$. Denote by $\mB$ the event that $u,w \in V_{\mathrm{low}}$. Clearly, $\Prob\lt[\mA  \rt]=p_2^r$, hence
	\begin{equation*}
	\Prob \lt[\mB\wedge\mA\rt] = p_2^r \cdot \Prob \lt[\mB|\mA\rt].
	\end{equation*}
	Let $X$ denote the random variable which counts the number of parallel edges in $G_2$ incident with $u$ or $w$ disregarding the pairs $\{u,v_1\}$, $\{v_{r-1},w\}$, and $\{u,w\}$. Observing that  $X\sim \text{Bin}\lt( (1 - \oone)2k , p_2 \rt)$ and using standard concentration inequalities, we have
	\[
	\Prob \lt[\mB|\mA\rt] \leq\Prob\lt[X<2 \ldc \log n \rt] < n^{-1.8}.
	\]
	Fixing the two endpoints $u,w$, the number of such sequences is at most $k^{r-1}$. Applying a union bound over all pairs of vertices and possible paths between them, we conclude that the probability of a path in $G_2$ of length $r \leq 5$ connecting two distinct vertices of $V_{\mathrm{low}}$ is at most
	\[
	\sum_{r=1}^{5}n^2\cdot k^{r-1}\cdot p_2^r\cdot n^{-1.8} = \tilde{O} \left( \frac{1}{n^{0.8}} \right).
	\]
	This completes the proof of the lemma.
\end{proof}

Recall that a set $A \subset V$ is \termdefine{partite} if $A \subset X$ or $A \subset Y$. A vertex is a \termdefine{parallel neighbor} of $A$ if it is connected to a vertex in $A$ via a parallel edge.

\begin{lem}\label{lem:g1sse}
	Let $(S,T)$ be a cut. W.v.h.p.\ the following holds. If $A \subseteq V$ is a partite set satisfying
    \begin{itemize}
    \item $|A| \leq n^{0.9}$, and
    \item for every $x \in A$, $\deg1_{G_1}^{\mathrm{Par}} (x) \geq \ldc \log n$,
 	\end{itemize}
  then $A$ has at least $|A| \econst \log n$ parallel neighbors in $G_1$.
\end{lem}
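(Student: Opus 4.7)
The plan is a union bound over candidate ``bad'' pairs $(A,B)$, where $A$ is a partite set of size $a\leq n^{0.9}$ and $B$ is a subset of the opposite part of size $b=\lfloor a\log n/2000\rfloor$. If some partite $A$ witnesses failure of the lemma, then $|N_{G_1}^{\text{Par}}(A)|<a\log n/2000$, so enlarging $N_{G_1}^{\text{Par}}(A)$ arbitrarily to a set $B$ of size $b$ gives a pair for which every $x\in A$ has at least $s:=\log n/1000$ parallel edges of $G_1$ going into $B$. Note that the case $a=1$ is vacuous: since $G$ is simple, a single vertex with $s$ parallel edges necessarily has $s>\log n/2000$ parallel neighbors, so only $a\geq 2$ needs to be handled.

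For each fixed pair $(A,B)$, the events $\{e_{G_1}^{\text{Par}}(x,B)\geq s\}$ as $x$ ranges over $A$ depend on disjoint edge sets of $G_1$ (because $G$ is simple), and are therefore mutually independent. Since $e_{G_1}^{\text{Par}}(x,B)\sim \mathrm{Bin}(d_x,p_1)$ with $d_x\leq b$, each probability is at most $\binom{b}{s}p_1^{s}\leq (eb p_1/s)^s=(eap_1/2)^s$, using $b=as/2$. Taking the $a$-th power and combining with the enumeration cost $2\binom{n}{a}\binom{n}{b}$ and the hypothesis $k=\omega(n/\log^{1/3}n)$ (which implies $\log(eap_1/2)\leq -\log(n/a)+O(\log\log n)$), a short calculation shows that the logarithm of the $a$-th term of the union bound is at most
\[
a\log(n/a)\bigl(1-s/2\bigr)+O(as\log\log n) \leq -\Omega(a\log^2 n),
\]
uniformly in $a\in\{2,\dots,n^{0.9}\}$ (using $\log(n/a)\geq\tfrac{1}{10}\log n$ throughout this range).

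Summing the resulting bound $e^{-\Omega(a\log^2 n)}$ over $a$ yields a total probability of $n^{-\omega(1)}$, which easily meets the w.v.h.p.\ requirement. The main obstacle is the delicate exponent balance: the per-vertex tail bound $(eap_1/2)^s$ must dominate both the entropic costs $\binom{n}{a}$ and $\binom{n}{b}$. This works precisely because the ratio $b/a=s/2=\log n/2000$ is moderately small: it makes $\binom{b}{s}$ only $(ea/2)^s$ rather than a much larger quantity, and the factor $-as\log(n/a)$ coming from $\log(eap_1/2)$ outweighs the enumeration costs $a\log(n/a)$ and $(as/2)\log(n/a)$ by a net factor of order $s$. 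The density hypothesis $k=\omega(n/\log^{1/3}n)$ is used only to guarantee $ap_1=o(1)$ uniformly for $a\leq n^{0.9}$, which is what makes $\log(eap_1/2)$ both negative and large in magnitude.
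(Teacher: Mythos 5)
Your proof is correct and takes essentially the same route as the paper: a union bound over pairs $(A,B)$ with $|B|=|A|\log n/2000$, showing that the entropic cost of choosing $(A,B)$ is overwhelmed by the probability that $A$ sends at least $|A|\log n/1000$ parallel $G_1$-edges into $B$. The only (immaterial) difference is that you bound this probability vertex-by-vertex via independence, whereas the paper bounds the aggregate count $e_{G_1}(A,B)\geq 2t$ with a single binomial tail; both computations yield the same per-pair bound $\left(e|A|p_1/2\right)^{|A|\log n/1000}$.
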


\begin{proof}
	Let $A \subseteq V$ be a partite set, and let $t = t(A) = |A| \econst\log n$. Let $\mathcal{P}(A)$ be the event that the minimum parallel degree of a vertex in $A$ is at least $\ldc \log n$.
	
	For any fixed set $B$,
	\begin{align*}
		\Prob \left[ N_{G_1}(A) \subseteq B \land \mathcal{P}(A) \right] \leq \Prob \left[ e_{G_1} (A,B) \geq 2 t \right]
		\leq \binom{e_G(A,B)}{2 t}p_1^{2t}\\
		\leq \binom{|A||B|}{2t} p_1^{2t} \leq \left( \frac{e \cdot |A||B|p_1}{2t} \right)^{2t}.
	\end{align*}
	Applying a union bound, we have:
	\begin{align*}
		& \Prob \left[ \exists A \text{ s.t.\ } |A| \leq n^{0.9} \land \mathcal{P}(A) \land \left| N_{G_1}(A) \right| \leq t(A) \right]\\
		& \leq 2 \sum_{|A|=1}^{n^{0.9}} {\binom{n}{|A|}\binom{n}{t(A)} \left( \frac{e |A| t(A) p_1}{2 t(A)} \right)^{2t(A)}}
		\leq  2 \sum_{|A|=1}^{n^{0.9}} \left( \frac{n e}{|A|} \right)^{|A|} \left(\frac{n e^3 |A|^2 p_1^2}{4t(A)}\right)^{t(A)}\\
		& \leq 2 \sum_{|A|=1}^{n^{0.9}} \left( \frac{n e}{|A|} \right)^{|A|} \left(\frac{e^3   |A|^2 \log^2(n)}{4 \delta^2 t(A) n}\right)^{t(A)}
		\leq \sum_{|A|=1}^{n^{0.9}} n^{-t(A)/20} = O \left( \frac{1}{n^{1/20}} \right). \qedhere
	\end{align*}
\end{proof}

\section{Proof of \thmref{thm:main}}\label{sec:proof}

\subsection{Outline}

As mentioned previously, we prove Theorem \ref{thm:main} by showing that a.a.s.\ $G_H$ does not contain a Hall cut. This is similar to the approach used in \cite{erdos1964random} to show that $p = \log n / n$ is the threshold for $K_{n,n} (p)$ to contain a perfect matching. There, a union bound over all cuts $(S,T)$ (satisfying certain conditions) was sufficient for the result. In this regard, the crucial property of $K_{n,n}$ is that every cut has many outgoing edges. Essentially the same approach was utilized in the proof of \cite[Lemma 3.1]{luria2019threshold} to show that if a $k$-regular, bipartite graph $G$ satisfies a certain expansion property, then the threshold for $G(p)$ to contain a perfect matching is $p = \Theta \left(\log n / k\right)$. However, for arbitrary $G$, there may be many cuts $(S,T)$ with few outgoing edges, potentially foiling the union bound. Indeed, this is the case in the counterexamples described in Appendix \ref{app:proof_counterexample}.

To overcome this we take a more delicate approach, wherein we group the various cuts in $G$ into families that we treat separately. Informally, the steps are as follows:

\begin{enumerate}
	\item The first family contains all cuts that are not internal (in the sense of Definition \ref{def:internal}), and therefore have many outgoing edges in $G$. Here a simple union bound suffices to show that a.a.s.\ none of these cuts are Hall cuts in $G_H$ (Claim \ref{clm:bigexpansion}).

	\item At this point we apply Lemma \ref{lem:structure} to conclude that any cut not covered in the previous step is close to one of the $m = 2^{\Theta(n/k)}$ cuts from the lemma. We fix one of these cuts, $(S',T')$, and show that conditioned on $G_H$ having no isolated vertices, w.v.h.p.\ none of the cuts that are $\varepsilon k$-close to $(S',T')$ becomes a Hall cut in $G_H$. As $m$ is subpolynomial, a union bound implies that a.a.s.\ $G_H$ does not contain a Hall cut.
	
	For a cut $(S,T)$, we define the set of \termdefine{shifted} vertices:
	\[
	\Delta = \Delta (S,T) = \left( S \setminus S' \right) \cup \left( S' \setminus S \right) \cup \left( T \setminus T' \right) \cup \left( T' \setminus T \right).
	\]
	We make use of the natural correspondence $\Delta \leftrightarrow (S,T)$, and interchange between them freely. We recall the definition of the set
    \[
    \Gamma = \Gamma (S',T') = \left\{ x \in V : \deg1_{G,S',T'}^{\mathrm{Cr}} (x) \geq n^{-1/20} k \right\}
    \]
    of vertices with many cross edges in $G$ w.r.t.\ $(S',T')$. We emphasize that $\Gamma$ is deterministic, i.e., depends only $G$ and $(S',T')$.
	
	\item The second family of cuts consists of those with $|\Delta| \geq n^{0.9}$. The insight here is that shifting a large number of vertices w.r.t.\ $(S',T')$ creates many cross edges. Here too, a union bound suffices to show that w.v.h.p.\ none of these are Hall cuts in $G_H$ (Claim \ref{clm:deltaupper}).
	
	\item The third family consists of cuts satisfying $\left| \Delta \right| \leq \frac{n^{-1/20}}{10}	\left| \Gamma \right|$. As the vertices in $\Gamma$ have, by definition, many cross edges, if $\Delta$ is much smaller than $\Gamma$ then most of these cross edges are unaffected. This allows us to employ a union bound here as well (Claim \ref{clm:gammaupper}).

	\item We now argue that w.v.h.p.\ we may remove from $G_H$ a small matching $M$ covering all vertices that have low degree in $G_1$, leaving the residual graph ${\widetilde{G}_H = G_H \setminus V(M)}$. In Observation \ref{obs:miracle}, we show that if $G_H$ contains a Hall cut that is $C$-close to $(S',T')$, then $\widetilde{G}_H$ contains a Hall cut that is $C$-close to
	\[({S' \setminus V(M) ,} {T' \setminus V(M)}).\]
	 It therefore suffices to consider cuts in $\widetilde{G}_H$.
	
	\item\label{itm:x4} It remains to consider $\Delta$ such that $\frac{n^{-1/20}}{10} \left| \Gamma \right| < |\Delta| < n^{0.9}$. We show that w.v.h.p.\ there is no such Hall cut if either $|\Delta| \geq \log n / \log\log n$ or $\Delta \cap \Gamma \neq \emptyset$ (Claim \ref{clm:win_by_gamma}). Here we take advantage of the fact that once the low degree vertices have been removed from $G_1$, the remaining vertices satisfy an expansion property (Claim \ref{clm:sse}).
	
	\item\label{itm:x5} Finally, we argue that w.v.h.p.\ there is no Hall cut satisfying $|\Delta| \leq \log n / \log\log n$ and $\Delta \cap \Gamma = \emptyset$ (Claim \ref{clm:x5}). Here we use the expansion property to show that such a cut cannot exist: w.v.h.p.\ $(S',T')$ contains many outgoing edges in $G_1$, and it is impossible to make all these edges parallel by shifting only $\log n/ \log \log n$ vertices.
	
\end{enumerate}

\subsection{The proof}

We first show that if a cut has many outgoing edges, the probability that it is a Hall cut in $G_H$ is very small.

\begin{claim}\label{clm:bigexpansion}
	A.a.s.\ $G_H$ contains no Hall cut $(S,T)$ that is not internal.
\end{claim}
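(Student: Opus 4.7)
The plan is to apply a direct union bound, exploiting the containment $G_H \subseteq G_2$. The key observation is that being a Hall cut requires the absence of outgoing edges: if $(S,T)$ is a Hall cut in $G_H$, then in particular no outgoing edge of $(S,T)$ lies in $E(G_2)$. Writing $C = e_G(S \cup T, S^c \cup T^c)$ for the number of cross edges of $(S,T)$ in $G$, and using $|S| > |T|$ together with Observation \ref{obs:crossedges} to conclude that $e_G(S,T^c) \geq C/2$, I would bound
\[
\Prob \left[ (S,T) \text{ is Hall in } G_H \right] \leq (1-p_2)^{e_G(S,T^c)} \leq \exp\left(-\frac{p_2 C}{2}\right).
\]

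Next, if $(S,T)$ is not internal, then by Definition \ref{def:internal} we have $C > 4nk/\log n$. Plugging in $p_2 = (\log n + \log \log \log \log n)/k$, this gives
\[
\frac{p_2 C}{2} > \frac{2n \left( \log n + \log \log \log \log n \right)}{\log n} \geq 2n
\]
for $n$ large enough. Since there are at most $4^n = 2^{2n}$ cuts in $G$, a union bound yields
\[
\Prob \left[ G_H \text{ contains a non-internal Hall cut} \right] \leq 4^n \cdot e^{-2n} = e^{-2n(1 - \log 2)} = o(1),
\]
using $\log 2 < 1$, and the claim follows.

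There is really no obstacle in this step: Definition \ref{def:internal} was calibrated precisely so that non-internal cuts have enough cross edges for a single exponential term to dominate the trivial $4^n$ enumeration of all cuts, with room to spare. The genuine difficulty of the paper begins in the next step, where internal cuts are too numerous for such a naive union bound and one must instead invoke the structural Lemma \ref{lem:structure} to cluster them into $2^{O(n/k)} = n^{o(1)}$ representatives.
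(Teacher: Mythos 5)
Your proof has the monotonicity backwards at the key step. You assert that if $(S,T)$ is a Hall cut in $G_H$ then no outgoing edge of $(S,T)$ lies in $E(G_2)$, justifying this by $G_H \subseteq G_2$. That containment gives exactly the opposite implication: being a Hall cut is a monotone \emph{decreasing} property of graphs, so the absence of outgoing edges in the smaller graph $G_H$ tells you nothing about the larger graph $G_2$ --- an outgoing edge can perfectly well belong to $E(G_2) \setminus E(G_H)$, and indeed $G_H$ is obtained from $G_1$ by adding only one edge of $G_2$ per isolated vertex of $G_1$, so it is typically far sparser than $G_2$. Consequently the bound $\Prob\left[(S,T)\text{ is Hall in } G_H\right] \leq (1-p_2)^{e_G(S,T^c)}$ is unjustified; the genuine inequality between these two events runs the other way and is useless here.

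The correct reduction, and the one the paper uses, goes through the subgraph $G_1 \subseteq G_H$: a Hall cut in $G_H$ is a fortiori a Hall cut in $G_1$, so $\Prob\left[(S,T)\text{ is Hall in } G_H\right] \leq \Prob\left[(S,T)\text{ is Hall in } G_1\right] \leq (1-p_1)^{e_G(S,T^c)}$. Everything else in your argument survives this substitution: with $p_1 = (\log n - \log\log\log\log n)/k$ and $C > 4nk/\log n$ one still gets $p_1 C/2 > 2n(1-\oone)$, and $4^n e^{-2n(1-\oone)} = e^{-2n(1-\log 2 - \oone)} = \oone$ since $\log 2 < 1$. So the error is local and easily repaired, but as written the central probabilistic inequality does not hold.
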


\begin{proof}
	Since $G_1 \subseteq G_H$ it suffices to prove the statement with $G_H$ replaced by $G_1$. Suppose $(S,T)$ is not internal. Then it has at least $4nk/\left( \log n \right)$ cross edges. By Observation \ref{obs:probHall} the probability that it is a Hall cut is less than
	\[
	(1-p_1)^{e_G(S,T^c)} \leq \exp \left( - p_1 \frac{2nk}{\log n} \right) = \exp( - (1 - \oone) 2n ) = o \left( 4^{-n} \right).
	\]
	The claim follows by applying a union bound over all $4^n$ cuts in $G$.
\end{proof}

We now apply Lemma \ref{lem:structure} to obtain the cuts $(S_1,T_1)\ldots,(S_m,T_m)$. By Claim \ref{clm:bigexpansion}, Lemma \ref{lem:structure}, and the fact that $m$ is subpolynomial, it suffices to show that w.v.h.p.\ all cuts $(S,T)$ that are $\varepsilon k$-close to $(S_i,T_i)$ for some $i$ are not Hall cuts in $G_H$.

Fix an index $i \in [m]$, set $S'=S_i,T'=T_i$, and define $\Gamma$ and $\Delta$ with respect to $(S',T')$ as in the outline. Henceforth, cross edges, parallel edges, cross degrees and parallel degrees are with respect to $(S',T')$.

\begin{claim}\label{clm:deltaupper}
	W.v.h.p.\ for every $\Delta$ such that $n^{0.9} \leq \left|\Delta\right| \leq \eps k$, $(S,T)$ is not a Hall cut in $G_H$.
\end{claim}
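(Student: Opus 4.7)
The plan is a straightforward union-bound argument. For each $\Delta$ in the indicated range, let $(S,T)$ be the associated cut and $C_{S,T}$ its number of cross edges in $G$. Since $G_1 \subseteq G_H$, any Hall cut of $G_H$ is also a Hall cut of $G_1 \sim G(p_1)$, so Observation \ref{obs:probHall} yields
\[
\Prob \left[ (S,T) \text{ is a Hall cut in } G_H \right] \leq (1-p_1)^{C_{S,T}/2}.
\]
The key estimate will be $C_{S,T} \geq |\Delta|k/3$; once this is in hand, a routine union bound over $\Delta$ will finish the argument.

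For the lower bound, I exploit property (a) of Lemma \ref{lem:structure} applied to $(S',T') = (S_i, T_i)$: every vertex has parallel degree at least $(1-\eps)k/2$ with respect to $(S',T')$. Any edge that is parallel in $(S',T')$ and has exactly one endpoint in $\Delta$ becomes cross in $(S,T)$, so $C_{S,T}$ is bounded below by the number of such edges. Summing parallel degrees over $v \in \Delta$ and subtracting twice the number of parallel edges with both endpoints in $\Delta$ (to correct for double counting) gives
\[
C_{S,T} \geq \sum_{v \in \Delta}\deg1_{G,S',T'}^{\text{Par}}(v) - 2|\Delta \cap X|\cdot|\Delta \cap Y| \geq |\Delta|(1-\eps)k/2 - |\Delta|^2/2.
\]
Since $|\Delta| \leq \eps k$, the subtracted term is bounded by $|\Delta|\eps k/2$, giving $C_{S,T} \geq |\Delta|k(1-2\eps)/2 \geq |\Delta|k/3$ for $n$ large, as claimed.

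Plugging $p_1 k = \log n - o(\log n)$ into the per-cut bound yields $\Prob[(S,T)\text{ is Hall in }G_H] \leq \exp(-|\Delta|\log n / 7)$. Union-bounding over $\Delta$ of size $j$, I get
\[
\sum_{j=n^{0.9}}^{\eps k} \binom{2n}{j} e^{-j\log n/7} \leq \sum_{j=n^{0.9}}^{\eps k} \left(\frac{2en}{j}\right)^{j} n^{-j/7}.
\]
The cutoff $|\Delta| \geq n^{0.9}$ is chosen precisely so that $\log(2en/j) \leq 0.11\log n < \log n / 7$ for $n$ large; each summand is then at most $n^{-0.03\, j} \leq n^{-0.03\, n^{0.9}}$, and the total is $e^{-\Omega(n^{0.9}\log n)}$, which decays super-polynomially in $n$ and thus is comfortably within the w.v.h.p.\ regime. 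I do not foresee a serious obstacle in this step; the only delicate point is the bound on double-counted intra-$\Delta$ parallel edges, which is exactly why the upper bound $|\Delta|\leq \eps k$ appears in the statement.
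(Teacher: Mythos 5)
Your proposal is correct and follows essentially the same route as the paper: the same lower bound $C_{S,T}\geq |\Delta|k/3$ via the parallel-degree guarantee of Lemma \ref{lem:structure} (with the same double-counting correction for intra-$\Delta$ parallel edges), followed by the same application of Observation \ref{obs:probHall} and the same union bound over $\Delta$. The only difference is that you track the constants in the final sum slightly more carefully, obtaining a super-polynomially small failure probability where the paper is content to state $O(1/n)$.
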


\begin{proof}
	By Lemma \ref{lem:structure}, each $x \in \Delta$ satisfies $\deg1_G^{\mathrm{Par}}(x) \geq (1-\eps)\frac{k}{2}$. Most of these parallel edges - all those with an endpoint not in $\Delta$ - are cross edges w.r.t.\ $(S,T)$. Thus the number of cross edges satisfies:
	\[
	e_G(S,T^c) + e_G(S^c,T) \geq (1-\eps) |\Delta| \frac{k}{2} - |\Delta|^2 \geq \frac{|\Delta| k}{3}.
	\]

	By Observation \ref{obs:probHall} the probability that $(S,T)$ is a Hall cut in $G_1$ is at most:
	\[
	(1-p_1)^{|\Delta|k/6} \leq \left( \frac{1}{n} \right)^{|\Delta|/7}.
	\]
	Applying a union bound, the probability that there exists such a Hall cut is at most
	\[
	\sum_{|\Delta| = n^{0.9}}^{\eps n} \binom{2n}{|\Delta|} \left( \frac{1}{n} \right)^{|\Delta|/7} = O \left( \frac{1}{n} \right).
	\qedhere
	\]
\end{proof}

\begin{claim}\label{clm:gammaupper}
	W.v.h.p.\ for all $\Delta$ s.t.\ $\left| \Delta \right| \leq \frac{n^{-1/20}}{10} \left|\Gamma\right|$, the corresponding cut $(S,T)$ is not a Hall cut in $G_H$.
\end{claim}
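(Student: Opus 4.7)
The plan is to lower-bound the number of cross edges of $(S,T)$ in $G$ by exploiting the fact that every vertex in $\Gamma$ carries at least $n^{-1/20}k$ cross edges with respect to $(S',T')$, most of which survive the perturbation by $\Delta$, and then to apply Observation \ref{obs:probHall} together with a union bound over $\Delta$.

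First, fix $v \in \Gamma \setminus \Delta$. Since $v$ lies on the same side of the bipartition in $(S,T)$ as in $(S',T')$, a cross edge $vu$ of $(S',T')$ remains a cross edge of $(S,T)$ whenever $u \notin \Delta$. Hence
\[
\deg1_{G,S,T}^{\text{Cr}}(v) \geq \deg1_{G,S',T'}^{\text{Cr}}(v) - |\Delta| \geq n^{-1/20}k - |\Delta|.
\]
The hypothesis $|\Delta| \leq n^{-1/20}|\Gamma|/10 \leq n^{19/20}/5$ combined with $k = \omega(n/\log^{1/3} n)$ gives $|\Delta| \leq n^{-1/20}k/2$ for large $n$, so $\deg1_{G,S,T}^{\text{Cr}}(v) \geq n^{-1/20}k/2$. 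Summing over $v \in \Gamma \setminus \Delta$ and dividing by $2$, the number $C$ of cross edges of $(S,T)$ in $G$ satisfies $C \geq |\Gamma \setminus \Delta| \cdot n^{-1/20}k/4 \geq 9|\Gamma|n^{-1/20}k/40$, where I used $|\Delta| \leq |\Gamma|/10$.

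By Observation \ref{obs:probHall} and $p_1 k = (1-o(1))\log n$, the probability that $(S,T)$ is a Hall cut in $G_H \supseteq G_1$ is at most $(1-p_1)^{C/2} \leq n^{-9D/8}$, where I set $D := n^{-1/20}|\Gamma|/10$. A union bound over $\Delta$ with $|\Delta| \leq D$ then yields a failure probability of at most $(D+1)\binom{2n}{D}n^{-9D/8}$. Using $\binom{2n}{D} \leq (2en/D)^D$ and simplifying (observing that $\log(2en/D) \leq \log n + O(1)$ whenever $D \geq 1$, and that $-D \log D \leq 0$), the exponent collapses to $-D\log n/8 + O(D)$, so the failure probability is at most $n^{-\Theta(D)}$. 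This is polynomially small in $n$ whenever $D \geq 1$, i.e., $|\Gamma| \geq 10n^{1/20}$.

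It remains to handle the boundary case $|\Gamma| < 10n^{1/20}$, in which the constraint forces $\Delta = \emptyset$ and thus $(S,T)=(S',T')$. If $|S'|\leq |T'|$ the cut is not a Hall cut by definition; otherwise Observation \ref{obs:crossedges} gives $e_G(S',T'^c) \geq k$, so the probability that $(S',T')$ is a Hall cut in $G_1$ is at most $(1-p_1)^k = n^{-1+o(1)}$, again polynomially small. The main delicacy is matching the coefficient in the union-bound exponent against that of the probability bound: the $(9/8)$-versus-$1$ comparison produces net decay precisely because $|\Gamma| \leq 2n$ pins $\log(2en/D)$ to $O(\log n)$ rather than something much larger.
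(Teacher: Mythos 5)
Your overall strategy is the same as the paper's: lower-bound the number $C$ of cross edges of the perturbed cut $(S,T)$ using the fact that every vertex of $\Gamma$ has cross degree at least $n^{-1/20}k$ with respect to $(S',T')$, then apply Observation \ref{obs:probHall} and a union bound over $\Delta$. Your union-bound bookkeeping and your treatment of the boundary case $|\Gamma|<10n^{1/20}$ (which forces $\Delta=\emptyset$) are fine. There is, however, a genuine gap in the step asserting that $|\Delta|\le n^{19/20}/5$ together with $k=\omega(n/\log^{1/3}n)$ gives $|\Delta|\le n^{-1/20}k/2$. This implication is false: the hypothesis permits $k=o(n)$ (say $k=n/\log^{1/4}n$), in which case $n^{-1/20}k/2=n^{19/20}/(2\log^{1/4}n)$ is much smaller than $n^{19/20}/5$. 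The range where your per-vertex estimate $\deg1_{G,S,T}^{\text{Cr}}(v)\ge n^{-1/20}k-|\Delta|\ge n^{-1/20}k/2$ breaks down, namely $n^{-1/20}k/2<|\Delta|\le n^{-1/20}|\Gamma|/10$, is nonempty whenever $|\Gamma|>5k$, which is possible since $|\Gamma|$ may be as large as $2n$ while $k$ may be below $2n/5$. In that regime your lower bound on $C$ collapses and the argument as written does not close.

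The gap is local and fixable in two standard ways. The cleaner fix (and the paper's route) is to account for the edges lost to $\Delta$ globally rather than per vertex: the number of edges of $G$ between $\Delta$ and $\Gamma$ is at most $|\Delta|\min\{|\Gamma|,k\}\le|\Delta|k$, so
\[
C \;\ge\; \tfrac12\,|\Gamma|\,n^{-1/20}k \;-\; |\Delta|k \;\ge\; 4|\Delta|k,
\]
using only $|\Delta|\le n^{-1/20}|\Gamma|/10$ and no upper bound on $|\Delta|$ in terms of $k$. (The paper additionally uses that shifted vertices of $\Gamma$ still contribute, since by Lemma \ref{lem:structure} their parallel degree is at least $(1-\eps)\frac{k}{2}\ge n^{-1/20}k$; your restriction to $\Gamma\setminus\Delta$ combined with $|\Delta|\le|\Gamma|/10$ serves the same purpose.) Alternatively, you could restrict attention to $|\Delta|\le n^{0.9}$ --- legitimate because Claim \ref{clm:deltaupper} already disposes of larger $\Delta$, and the paper's own union bound does exactly this via the $\min$ in its summation range --- after which $n^{0.9}\le n^{-1/20}k/2$ does follow from $k=\omega(n/\log^{1/3}n)$. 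With either repair your proof is correct and essentially identical to the paper's.
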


\begin{proof}
	Suppose $\Delta$ satisfies the claim's hypothesis. By Lemma \ref{lem:structure} and the definition of $\Gamma$, each $x \in \Gamma$ satisfies 
\[
\min \left\{ \deg1_G^{\mathrm{Cr}} \left( x \right), \deg1_G^{\mathrm{Par}} \left( x \right) \right\} \geq n^{-1/20} k.
\]
Ignoring, for the moment, the possibility that $N_G(x) \cap \Delta \neq \emptyset$, this means that every $x \in \Gamma$ is incident to at least $n^{-1/20} k$ cross edges w.r.t.\ $(S,T)$, regardless of whether $x \in \Delta$. There are at most $|\Delta| \min \{|\Gamma|, k\}$ edges between $\Delta$ and $\Gamma$. Accounting for possible double counting of the edges incident to $\Gamma$, we obtain:
	\[
	e_G(S,T^c) + e_G(S^c,T) \geq \frac{|\Gamma| n^{-1/20} k}{2} - |\Delta| \min \{|\Gamma|, k\} \geq 4 |\Delta| k.
	\]
	Applying Observation \ref{obs:probHall}, the probability that $(S,T)$ is a Hall cut in $G_1$ is at most
	\[
	(1-p_1)^{4 |\Delta|k/2} \leq \left(\frac{1}{n}\right)^{1.9|\Delta|}.
	\]
	
	We now observe that if $\Delta = \emptyset$ (i.e., $(S,T) = (S',T')$), then the probability that $(S,T)$ is a Hall cut in $G_1$ is at most $(1-p_1)^k = \tilde{O} (1/n)$. Let $X$ be the number of cuts satisfying the claim's hypothesis that are Hall cuts in $G_1$. Applying a union bound, we have:
	\[
	\Prob \left[ X > 0 \right] \leq \tilde{O} \left( \frac{1}{n} \right) + \sum_{1 \leq |\Delta| \leq \min \left\{ \frac{n^{-1/20}}{10}\left| \Gamma \right|, n^{0.9} \right\}} \binom{2n}{|\Delta|} \left( \frac{1}{n} \right)^{1.9|\Delta|} = O \left( \frac{1}{n^{0.9}} \right).
	\qedhere
	\]
\end{proof}

\begin{rmk}\label{rmk:smallgamma}
	As a consequence of Claim \ref{clm:gammaupper}, if $\left| \Gamma \right| \geq 10 n^{19/20}$ then w.v.h.p.\ none of the cuts that are $\eps k$-close to $(S',T')$ become Hall cuts in $G_H$. This is because Claim \ref{clm:deltaupper} covers all cases where $\left| \Delta \right| \geq n^{0.9}$, and the previous claim covers all cases where $\left| \Delta \right| \leq \frac{n^{-1/20}}{10} \left|\Gamma\right|$. Therefore, we proceed under the assumption that $\left| \Gamma \right| < 10 n^{19/20}$.
\end{rmk}

Before continuing to steps \ref{itm:x4} and \ref{itm:x5}, we modify $G_H$ by removing a small matching covering the low degree vertices that are not in $\Gamma$. Moreover, this matching contains only parallel edges. The following claim, together with Lemma \ref{lem:large_dist}, implies that conditioned on $G_H$ having no isolated vertices, w.v.h.p.\ such a matching exists.

\begin{clm}\label{clm:few_cross}
	Conditioned on there being no isolated vertices in $G_H$, w.v.h.p.\ every vertex in $V \setminus \Gamma$ is incident to at least one parallel edge in $G_H$.
\end{clm}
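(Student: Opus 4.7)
The plan is to bound the probability that a fixed vertex $x \in V \setminus \Gamma$ lacks a parallel edge in $G_H$ by $\tilde{O}(n^{-1-1/20})$, then union bound over all $x$. Since Claim \ref{lem:isolated} gives $\Pr[G_H \text{ has no isolated vertices}] = 1 - o(1)$, the conditioning changes probabilities by at most a $(1+o(1))$ factor, so it suffices to bound the unconditional failure probability. By the definition of $\Gamma$, every $x \in V \setminus \Gamma$ satisfies $\deg1_G^{\text{Cr}}(x) \leq n^{-1/20}k$ and hence $\deg1_G^{\text{Par}}(x) \geq (1-n^{-1/20})k$.

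I would split the bad event at $x$ into two cases. \textbf{Case A:} $x$ is not isolated in $G_1$ but has no parallel edge in $G_1$. Since the presence of parallel versus cross edges at $x$ in $G_1$ are independent events, this probability factors as
\[
(1-p_1)^{\deg1_G^{\text{Par}}(x)} \cdot \left(1 - (1-p_1)^{\deg1_G^{\text{Cr}}(x)}\right) \leq \tilde{O}(1/n) \cdot \tilde{O}(n^{-1/20}) = \tilde{O}(n^{-1-1/20}),
\]
where the first factor comes from $p_1 \cdot (1-n^{-1/20})k = (1-o(1))\log n - O(1)$ and the second from the elementary bound $1-(1-p)^m \leq mp$. \textbf{Case B:} $x$ is isolated in $G_1$, and the edge added at $x$ in $G_H$ is a cross edge. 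Using the two-round coupling $G_2 = G_1 \cup G'$ with $G' \sim G((p_2-p_1)/(1-p_1))$, conditional on $x$ being isolated in $G_1$, each $G$-edge at $x$ is independently in $G_2$ with probability $q = (p_2-p_1)/(1-p_1)$. The added edge is cross only if some cross edge at $x$ is present in $G_2$, which has conditional probability at most $\deg1_G^{\text{Cr}}(x) \cdot q = \tilde{O}(n^{-1/20})$. Multiplying by $\Pr[x \text{ isolated in } G_1] = (1-p_1)^k = \tilde{O}(1/n)$ again yields $\tilde{O}(n^{-1-1/20})$.

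Summing over at most $2n$ vertices gives total failure probability $\tilde{O}(n^{-1/20})$, which decays polynomially, hence the w.v.h.p.\ conclusion. The main subtlety is Case B: we must exploit the two-round construction so that conditioning on isolation in $G_1$ leaves the $G_2$-edges at $x$ conditionally independent Bernoulli$(q)$, and we invoke Claim \ref{lem:isolated} to ensure that a.a.s.\ no edge at $x$ is contributed to $G_H$ via the addition step of another isolated vertex (so $x$'s only added edge is the one chosen for $x$ itself). The fallback scenario $G_H = G_1$ is absorbed by the conditioning, as in that case the isolated vertex $x$ remains isolated in $G_H$.
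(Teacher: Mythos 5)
Your proof is correct and takes essentially the same approach as the paper: a first-moment bound of $\tilde{O}(n^{-21/20})$ for each fixed $x \in V \setminus \Gamma$, exploiting the independence of the (disjoint) sets of parallel and cross edges at $x$, followed by a union bound over vertices. The paper's version is slightly more compact --- it notes that the bad event directly implies ``no parallel edge at $x$ in $G_1$ and at least one cross edge at $x$ in $G_2$'' and bounds that single conjunction by $n^{-1/20}k\, p_2\, (1-p_1)^{k(1-n^{-1/20})}$, which subsumes both of your cases and makes your appeal to Claim \ref{lem:isolated} (about edges added for other isolated vertices) unnecessary, since any cross edge $x$ acquires in $G_H$ is in particular a cross edge in $G_2$.
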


\begin{proof}
	Assuming there are no isolated vertices in $G_H$, if there exists some $v \in V\setminus \Gamma$ s.t.\ $\deg1_{G_H}^{\mathrm{Par}} (v) = 0$, then $\deg1_{G_1}^{\mathrm{Par}} (v) = 0$ and $\deg1_{G_2}^{\mathrm{Cr}} (v) > 0$. We use the first moment method to show that w.v.h.p.\ there are no vertices $v \notin \Gamma$ for which this holds. Indeed, if $v \notin \Gamma$ then the probability of this occurring is bounded from above by
	\[
	\frac{k}{n^{1/20}}p_2 (1-p_1)^{k\left( 1 - n^{-1/20} \right)}
	= \tilde{O} \left(\frac{1}{n^{21/20}}\right).
	\]
	Therefore the expected number of such vertices is $\tilde{O} \left( n^{-1/20} \right)$. By Markov's inequality, w.v.h.p.\ there are none.
\end{proof}

Recall that
\[
V_{\mathrm{low}} = \left\{ x \in V \setminus \Gamma : \deg1_{G_1}^{\mathrm{Par}} (x) \leq\ldc \log n \right\}.
\]
Conditioning on the conclusions of Lemma \ref{lem:large_dist} and Claim \ref{clm:few_cross} holding, there exists a matching $M \subseteq G_H$ of size $\left|V_{\mathrm{low}}\right|$ consisting of parallel edges that contains $V_{\mathrm{low}}$.

\begin{claim}\label{clm:N(Vlow)}
	W.v.h.p.\ $N_{G_H}(V_{\mathrm{low}}) \cap \Gamma = \emptyset$.
\end{claim}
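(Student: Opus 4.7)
The plan is a direct first-moment argument, relying on the assumption $\lt|\Gamma\rt| < 10 n^{19/20}$ from Remark \ref{rmk:smallgamma}. The goal is to show that w.v.h.p.\ no edge of $G_H$ has one endpoint in $V_{low}$ and the other in $\Gamma$. Writing $E(G_H) = E(G_1) \dcup (E(G_H) \setminus E(G_1))$, I would bound separately the expected number of ``bad'' edges in each part. The total number of $G$-edges between $V \setminus \Gamma$ and $\Gamma$ is at most $\lt|\Gamma\rt| k \leq 10 n^{19/20} k$, which is the key combinatorial input that lets the polynomial decay come through.

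For edges of $G_1$: fix a candidate $xy \in E(G)$ with $x \in V \setminus \Gamma$ and $y \in \Gamma$. I would condition on $xy \in E(G_1)$; since the parallel degree $\deg1_{G_1}^{\text{Par}}(x)$ is the sum of $k - O(1)$ independent Bernoulli$(p_1)$ variables with total mean $(1-\oone)\log n$, conditioning on a single edge only shifts the distribution by one Bernoulli variable. A standard Chernoff bound then gives
\[
\Prob\lt[x \in V_{low} \,\Big|\, xy \in E(G_1)\rt] \leq \Prob\lt[\text{Bin}\lt((1 - n^{-1/20})k - 1,\, p_1\rt) \leq \ldc \log n\rt] \leq n^{-0.99},
\]
mirroring the calculation already carried out in the proof of Lemma \ref{lem:large_dist}. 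Hence $\Prob[xy \in E(G_1) \wedge x \in V_{low}] \leq p_1 \cdot n^{-0.99}$, and summing over at most $10 n^{19/20} k$ candidate edges yields an expected bad count of $\tilde{O}(n^{19/20} \cdot \log n \cdot n^{-0.99}) = \tilde{O}(n^{-0.04})$, which is polynomially small.

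For edges of $E(G_H) \setminus E(G_1)$: such an edge $xy$ can arise only because some endpoint $u \in \{x, y\}$ is isolated in $G_1$ and $xy$ is chosen uniformly as its added edge from the $G_2$-edges at $u$. A necessary condition is $xy \in E(G_2) \setminus E(G_1)$ and $u$ isolated in $G_1$, which by the coupling between $G_1$ and $G_2$ occurs with probability at most $(p_2 - p_1)(1-p_1)^{k-1} = \tilde{O}(1/(nk))$. If $u = x$, then $\deg1_{G_1}^{\text{Par}}(x) = 0$ so $x \in V_{low}$ automatically, and the expected count is $\tilde{O}(n^{19/20} \cdot 1/n) = \tilde{O}(n^{-1/20})$. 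If instead $u = y \in \Gamma$, we additionally require $x \in V_{low}$; since the only shared edge is $xy \in E(G_2) \setminus E(G_1)$, the Chernoff argument above still yields $\Prob[x \in V_{low} \mid y \text{ isolated in } G_1,\, xy \in E(G_2)] \leq n^{-0.99}$, giving an even smaller count $\tilde{O}(n^{-1.04})$.

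All three expected counts decay polynomially, so Markov's inequality implies that w.v.h.p.\ no such bad edge exists, giving $N_{G_H}(V_{low}) \cap \Gamma = \emptyset$. The main technical subtlety is controlling the correlation between the event $\{x \in V_{low}\}$ (a statement about the \emph{total} parallel degree at $x$ in $G_1$) and the event that a specific edge is present in $G_H$; this is resolved by the observation that conditioning on one additional edge shifts the underlying binomial by at most one, leaving the $n^{-\Omega(1)}$ tail estimate intact.
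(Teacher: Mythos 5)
Your argument is correct and follows essentially the same route as the paper: a first-moment bound over potential bad edges, using the fact that the event $x\in V_{low}$ is (up to removing one shared edge) independent of the presence of a specific edge at $x$, combined with the bound on $\lt|\Gamma\rt|$ from Remark \ref{rmk:smallgamma}. The paper avoids your case split between $E(G_1)$ and the repair edges by simply using $N_{G_H}(x)\subseteq N_{G_2}(x)$ and bounding $\Prob\lt[y\in N_{G_2}(x)\rt]\leq p_2$, but this is only a streamlining of the same calculation.
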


\begin{proof}
	By Remark \ref{rmk:smallgamma} we may assume $\left| \Gamma \right| < n^{0.96}$. Fix an arbitrary vertex $x\not\in \Gamma$. Then
	\begin{align*}
		\Prob \left[ x \in V_{\mathrm{low}} \land N_{G_H}(x) \cap \Gamma\neq \emptyset \right] &\leq 
        \sum_{y \in \Gamma}{\Prob \left[ x \in V_{\mathrm{low}} \land y \in N_{G_H}(x)\right] }\\
        &\leq \sum_{y \in \Gamma}{\Prob \left[ \lt| N^{\mathrm{Par}}_{G_1}(x)\setminus\{y\} \rt| \leq \ldc \log n \land y \in N_{G_2}(x)\right] }\\
        &=\sum_{y \in \Gamma}{\Prob \left[\lt| N^{\mathrm{Par}}_{G_1}(x)\setminus\{y\} \rt| \leq \ldc \log n \right] \cdot \Prob \left[y \in N_{G_2} (x) \right]}\\
		&\leq |\Gamma| \left(\frac{1}{n}\right)^{0.99} \cdot p_2 = O \left( \frac{1}{n^{1.01}} \right),
	\end{align*}
	where the probability of the first event is estimated as in the proof of \lemref{lem:large_dist}. The equality between the second and third lines is due to the fact that the events $\lt| N^{\mathrm{Par}}_{G_1}(x)\setminus\{y\} \rt| \leq \ldc \log n $ and $y \in N_{G_2} (x)$ are independent. The statement of the claim follows from a union bound over all $O(n)$ choices of $x$.
\end{proof}

\begin{claim}\label{clm:crossremoved}
	W.v.h.p.\ the number of cross edges incident to $V(M)$ in $G$ is $o \left( n^{0.99} \right)$.
\end{claim}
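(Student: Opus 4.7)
The plan is to bound the total cross degree of $V(M)$ in $G$ by combining a size bound on $V(M)$ with a pointwise cross-degree bound coming from the definition of $\Gamma$. The key observation is that by construction $V(M)$ lies entirely outside $\Gamma$ (w.v.h.p.), so every vertex of $V(M)$ contributes only subpolynomially many cross edges.

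First, I would show that w.v.h.p.\ $V(M) \subseteq V \setminus \Gamma$. By definition $V_{low} \subseteq V \setminus \Gamma$, so only the matching partners of $V_{low}$ could a priori meet $\Gamma$. Since $M \subseteq G_H$, these partners lie in $N_{G_H}(V_{low})$, and Claim~\ref{clm:N(Vlow)} guarantees that w.v.h.p.\ $N_{G_H}(V_{low}) \cap \Gamma = \emptyset$. Next, by the definition of $\Gamma$, every vertex $x \in V \setminus \Gamma$ satisfies $\deg1_G^{\text{Cr}}(x) < n^{-1/20} k$. Combined with $|V(M)| = 2|V_{low}| \leq 2 n^{0.01}$, which holds w.v.h.p.\ by Lemma~\ref{lem:large_dist}, this yields at most $2 n^{0.01} \cdot n^{-1/20} k \leq 2 n^{0.96} = o(n^{0.99})$ cross edges of $G$ incident to $V(M)$, as required.

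There is no genuine obstacle here: the statement follows deterministically from the two w.v.h.p.\ events (the size bound on $V_{low}$ from Lemma~\ref{lem:large_dist} and the separation of $N_{G_H}(V_{low})$ from $\Gamma$ from Claim~\ref{clm:N(Vlow)}), and their intersection is again w.v.h.p.\ since polynomial decay of failure probabilities is preserved under a single union bound. The only minor subtlety is making sure the two conditioning events from which $M$ was constructed (namely the conclusions of Lemma~\ref{lem:large_dist} and Claim~\ref{clm:few_cross}) are in force before invoking Claim~\ref{clm:N(Vlow)}, but this is exactly the regime in which the matching was defined.
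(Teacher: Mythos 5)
Your proof is correct and follows essentially the same route as the paper: bound $|V(M)|$ via Lemma~\ref{lem:large_dist}, use Claim~\ref{clm:N(Vlow)} to place $V(M)$ outside $\Gamma$, and then apply the pointwise cross-degree bound $n^{-1/20}k$ from the definition of $\Gamma$. The only difference is that you spell out the union-bound and conditioning bookkeeping more explicitly than the paper does.
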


\begin{proof}
	By \lemref{lem:large_dist} and \claref{clm:N(Vlow)}, we may assume that $|M| = |V_{\mathrm{low}}|\leq n^{0.01}$ and $V(M) \cap \Gamma = \emptyset$. Therefore, each vertex in $V(M)$ is incident to $O \left(n^{0.95}\right)$ cross edges, and the claim follows.
\end{proof}

Observe that the identity of $M$ depends only on the parallel edges of $G_H$ w.r.t.\ $(S',T')$. This allows us to think of $G_1$ as being exposed in two independent stages. In the first stage the parallel edges of $G_1$ are exposed, and in the second the cross edges are exposed.

Set
\begin{align*}
&\tg = G \setminus V(M),\\
&\tg_1 = G_1 \setminus V(M),\\
&\tg_H = G_H \setminus V(M),\\
&\left( \tS , \tT \right) = \left( S' \setminus V(M), T' \setminus V(M) \right),
\end{align*}
and
\[\Gamma_{\mathrm{bad}} = \left\{ x \in \Gamma : \deg1_{G_1}^{\mathrm{Par}} (x) < \ldc \log n \right\}.\]
For a cut $(S,T)$ we define the set of shifted vertices with respect to $(\tS,\tT)$ as
\[
\tDelta = \tDelta(S \setminus V(M), T \setminus V(M)) = \Delta(S,T) \setminus V(M).
\]

\begin{observation}\label{obs:miracle}
	Suppose that $G_H$ has a Hall cut $(S,T)$ whose shifted vertex set is $\Delta$. Then $\tDelta$ corresponds to a Hall cut in $\tg_H$, and $|\Delta| - |V(M)| \leq |\tDelta| \leq |\Delta|$.
\end{observation}

\begin{proof}
	Observe that there is no edge connecting $S$ and $T^{c}$. Therefore, $|S \cap V(M)| \leq |T\cap V(M)|$,
	and so $(S \setminus V(M), T \setminus V(M))$ is also a Hall cut in $\tg_H$.
	
	 Since $\tDelta$ is obtained from $\Delta$ by removing at most $|V(M)|$ vertices, the conclusion follows.
\end{proof}

By Observation \ref{obs:miracle}, it suffices to show that w.v.h.p.\ there are no Hall cuts in $\tg_H$ with $\frac{n^{-1/20}}{10} |\Gamma| - |V(M)| \leq |\tDelta| \leq n^{0.9}$.

\begin{clm}\label{clm:sse}
	W.v.h.p\ every partite set $A \subseteq V \setminus \left( V(M) \cup \Gamma_{\mathrm{bad}} \right)$ of size at most $n^{0.9}$ satisfies
	\[
	\left| N^{\mathrm{Par}}_{\tg_1} (A) \right| \geq |A| \egconst \log n.
	\]
\end{clm}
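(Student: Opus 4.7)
The plan is to reduce Claim \ref{clm:sse} deterministically to the w.v.h.p.\ conclusions of Lemma \ref{lem:g1sse} applied to $(S',T')$ and of Lemma \ref{lem:large_dist}(ii). Fix a partite $A \subseteq V \setminus (V(M) \cup \Gamma_{bad})$ with $|A| \leq n^{0.9}$. First, every $a \in A$ satisfies $\deg_{G_1}^{\text{Par}}(a) \geq \ldc \log n$: if $a \notin \Gamma$ this is because $a \notin V_{low}$, and if $a \in \Gamma$ this is because $a \notin \Gamma_{bad}$. Lemma \ref{lem:g1sse} therefore yields $|N^{\text{Par}}_{G_1}(A)| \geq |A| \econst \log n$. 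Since $A \cap V(M) = \emptyset$, we have $N^{\text{Par}}_{\tg_1}(A) = N^{\text{Par}}_{G_1}(A) \setminus V(M)$, so it suffices to show $|N^{\text{Par}}_{G_1}(A) \cap V(M)| \leq |A|$.

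The crux is the following deterministic bound, assuming Lemma \ref{lem:large_dist}(ii): for every $a \in V \setminus V(M)$, $|N^{\text{Par}}_{G_1}(a) \cap V(M)| \leq 1$. Decompose $V(M) = V_{low} \sqcup U$, where $U$ consists of the $M$-partners of the vertices in $V_{low}$, and suppose for contradiction that $a$ has two distinct parallel $G_1$-neighbors $b_1, b_2 \in V(M)$. For each $i \in \{1,2\}$ define $v_i := b_i$ if $b_i \in V_{low}$, and otherwise let $v_i$ be the $M$-partner of $b_i$; then $v_1, v_2 \in V_{low}$, and they are distinct (if both $b_i$ lie in the same one of $V_{low}, U$ this is immediate since $M$ is a matching, while if one lies in $V_{low}$ and the other in $U$ then $v_1, v_2$ lie in opposite parts of the bipartition). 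Concatenating the (possibly trivial) $M$-edges $\{v_i, b_i\} \subseteq G_H$ with the $G_1 \subseteq G_H$ edges $\{a, b_i\}$ produces a walk $v_1, b_1, a, b_2, v_2$ of length at most $4$ in $G_H$, contradicting the $G_H$-distance lower bound of $6$ between distinct vertices of $V_{low}$ provided by Lemma \ref{lem:large_dist}(ii).

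Summing over $a \in A$ gives $|N^{\text{Par}}_{G_1}(A) \cap V(M)| \leq |A|$, and therefore $|N^{\text{Par}}_{\tg_1}(A)| \geq |A|(\econst \log n - 1) \geq |A| \egconst \log n$ for $n$ sufficiently large, since $\econst - \egconst > 0$. Because both invoked lemmas hold w.v.h.p., so does the claim. The main obstacle to watch for is the three-case analysis behind the ``at most one'' bound: one must verify that the shortcut walk $v_1, b_1, a, b_2, v_2$ is a genuine $G_H$-walk in all combinations of $(b_1, b_2) \in V_{low} \cup U$, which works precisely because matching edges live in $G_H$ and can be concatenated with $G_1$-edges, and the distance threshold $6$ in Lemma \ref{lem:large_dist}(ii) is chosen exactly to rule out walks of length up to $4$.
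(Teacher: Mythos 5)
Your proof is correct and follows essentially the same route as the paper's: both arguments reduce the claim to Lemma \ref{lem:g1sse} (after checking the minimum parallel degree condition via $A\cap(V_{low}\cup\Gamma_{bad})=\emptyset$) and then bound the number of $G_1$-parallel neighbors lost to $V(M)$ by $|A|$, using the distance-$6$ separation of $V_{low}$ from Lemma \ref{lem:large_dist} to show each $a\in A$ contributes at most one such neighbor. The only difference is presentational — you argue directly with an explicit case analysis on the walk $v_1,b_1,a,b_2,v_2$, while the paper argues by contradiction — and your version is, if anything, slightly more careful.
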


\begin{proof}
	Suppose the conclusion does not hold, i.e., there is a partite set $A \subseteq V(\tg_1) \setminus \Gamma_{\mathrm{bad}}$ with $|A| \leq n^{0.9}$ s.t.\ $\left| N_{\tg_1} (A) \right| < |A| \egconst \log n$. Then, for every $x \in A$, $\deg1_{G_H}^{\mathrm{Par}} (x) \geq \ldc \log n$. Furthermore,
	\[
	N^{\mathrm{Par}}_{G_1}(A) \subseteq N^{\mathrm{Par}}_{\tg_1}(A) \cup \left( \left( V_{\mathrm{low}} \cup N^{\mathrm{Par}}_{G_H} (V_{\mathrm{low}}) \right) \cap N^{\mathrm{Par}}_{G_1}(A)\right).
	\]
	However, $\left| \left( V_{\mathrm{low}} \cup N^{\mathrm{Par}}_{G_H} (V_{\mathrm{low}}) \right) \cap N^{\mathrm{Par}}_{G_1}(A) \right| \leq |A|$, because if a vertex in $A$ has two neighbors in $V_{\mathrm{low}} \cup N^{\mathrm{Par}}_{G_H} (V_{\mathrm{low}})$, then there are two vertices in $V_{\mathrm{low}}$ whose distance in $G_H$ is at most $4$, contradicting Lemma \ref{lem:large_dist}. Therefore:
	\[
	\left| N^{\mathrm{Par}}_{G_1}(A) \right| \leq \left| N^{\mathrm{Par}}_{\tg_1} (A) \right| + |A| \leq |A| \egconst \log n + |A| < |A| \econst \log n.
	\]
	The set $A$ does not satisfy the conclusion of Lemma \ref{lem:g1sse}, which holds w.v.h.p. Therefore the conclusion of the present claim holds w.v.h.p.\ as well.
\end{proof}

\begin{clm}
	W.v.h.p.\ $\left| \Gamma_{\mathrm{bad}} \right| \leq |\Gamma| / n^{0.4}$.
\end{clm}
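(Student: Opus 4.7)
The plan is to estimate $\Prob[x \in \Gamma_{bad}]$ for a fixed $x \in \Gamma$ and then conclude by linearity of expectation combined with Markov's inequality. The key quantitative input comes from property (1) of Lemma \ref{lem:structure}: every $x \in V(G)$, and in particular every $x \in \Gamma$, satisfies $\deg1^{\text{Cr}}_{G,S',T'}(x) \leq (1+\eps)k/2$. Since $G$ is $k$-regular, this forces $\deg1^{\text{Par}}_{G,S',T'}(x) \geq (1-\eps)k/2$. So although the vertices of $\Gamma$ were singled out for having many cross edges, they still carry a near-$k/2$ worth of parallel edges, which is all that is needed.

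With this in hand, fix $x \in \Gamma$. Then $\deg1^{\text{Par}}_{G_1}(x) \sim \text{Bin}(\deg1^{\text{Par}}_G(x), p_1)$, a binomial of mean $\mu \geq (1-\eps)p_1 k/2 = (1 - \oone)\log(n)/2$. The event $x \in \Gamma_{bad}$ is the event that this binomial lies below $\tfrac{1}{1000}\log n$, far below its mean. I would bound its probability by essentially the same tail computation that appears in the proof of Lemma \ref{lem:large_dist}:
\[
\Prob[x \in \Gamma_{bad}] \leq \log(n) \binom{(1-\eps)k/2}{\log(n)/1000} p_1^{\log(n)/1000} (1-p_1)^{(1-\eps)k/2 - \log(n)/1000}.
\]
Using $\binom{m}{i} \leq (em/i)^i$ together with $(1-p_1)^{(1-\eps)k/2} = n^{-1/2 + \oone}$, the right-hand side simplifies to $(500 e)^{\log(n)/1000} \cdot n^{-1/2 + \oone} = n^{-1/2 + \log(500e)/1000 + \oone} \leq n^{-0.49}$ for $n$ large enough. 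The only essential change from the $V_{low}$ calculation in Lemma \ref{lem:large_dist} is that the exponent on $(1-p_1)$ is roughly halved, because the parallel-degree guarantee for vertices in $\Gamma$ is $(1-\eps)k/2$ rather than the $(1-\oone)k$ that holds off $\Gamma$.

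Since $\Gamma$ is deterministic, linearity of expectation gives $\E[|\Gamma_{bad}|] \leq |\Gamma| \cdot n^{-0.49}$, and Markov's inequality then yields
\[
\Prob\!\left[|\Gamma_{bad}| > |\Gamma|/n^{0.4}\right] \leq n^{-0.09},
\]
which is polynomial decay in $n$ and hence the claimed w.v.h.p.\ bound. I do not anticipate any real obstacle here: the whole argument is a rerun of the tail calculation from Lemma \ref{lem:large_dist}, with the lower bound on the parallel degree now supplied by Lemma \ref{lem:structure} instead of by the definition of $V \setminus \Gamma$. The only thing to monitor is the arithmetic in the exponent, but the gap between $0.49$ and $0.4$ leaves comfortable slack.
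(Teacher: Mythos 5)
Your proof is correct and follows exactly the paper's argument: the parallel-degree lower bound $(1-\eps)k/2$ from Lemma \ref{lem:structure}, the same binomial tail estimate giving $O(n^{-0.49})$ per vertex, and Markov's inequality to conclude. The paper's version is just a more compressed statement of the same computation.
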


\begin{proof}
	By Lemma \ref{lem:structure}, every vertex has parallel degree in $G$ at least $(1-\eps)\frac{k}{2}$. Therefore the probability that a vertex's parallel degree in $G_1$ is less than $\ldc \log n$ is bounded above by
	\[
	\ldc \log n \binom{(1-\eps)\frac{k}{2}}{\ldc \log n} p_1^{\ldc \log n} (1-p_1)^{(1-\eps)k/2 - \ldc \log n} = O \left( n^{-0.49} \right).
	\]
	The conclusion follows from an application of Markov's inequality.
\end{proof}

Henceforth, unless otherwise specified, parallel degrees, cross degrees, etc., are with respect to the vertex set $V \setminus V(M)$ and the cut $\left( \tS,\tT \right)$.

\begin{clm}\label{clm:win_by_gamma}
	The following holds w.v.h.p. Suppose $\tDelta$ of size $\frac{n^{-1/20}}{10} |\Gamma| - |V(M)| \leq \left| \tDelta \right| \leq n^{0.9}$ satisfies one of:
    \begin{enumerate}
        \item\label{itm:large delta} $|\tDelta| \geq \log n / \log\log n$.
        \item\label{itm:small delta} $\tDelta \cap \Gamma \neq \emptyset$.
    \end{enumerate}
    Then $\tDelta$ does not correspond to a Hall cut in $\tGH$.
\end{clm}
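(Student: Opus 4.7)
The plan is to condition on all the w.v.h.p.\ events established earlier in this section---the expansion of \claref{clm:sse}, the bound $|\Gamma_{bad}|\leq|\Gamma|/n^{0.4}$, and the bound $|V(M)|\leq 2n^{0.01}$---assume for contradiction that some $\Delta$ satisfying the hypothesis yields a Hall cut $(S,T)$ in $\tGH=\tg_1$, and drive the probability of this event low enough to survive a union bound over $\Delta$. Following the pattern of the preceding claims, I expose the parallel and cross edges of $G_1$ (with respect to $(S',T')$) in two independent stages: the parallel edges first (which fix $M$, $\tg$, $(\tS,\tT)$ and the parallel edges of $\tg_1$), leaving the cross-in-$(\tS,\tT)$ edges of $\tg$ as the remaining randomness.

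First I partition $\Delta$ into $\Delta_1=\Delta\cap\tS$, $\Delta_2=\Delta\cap\tS^c$, $\Delta_3=\Delta\cap\tT$, $\Delta_4=\Delta\cap\tT^c$. A direct case analysis shows that the $S\to T^c$ edges of $\tg$ in the cut $(S,T)$ split into (a) parallel-in-$(\tS,\tT)$ edges from $\Delta_2$ to $\tT^c\setminus\Delta_4$ or from $\Delta_3$ to $\tS\setminus\Delta_1$, and (b) cross-in-$(\tS,\tT)$ edges from $\tS\setminus\Delta_1$ to $\tT^c\setminus\Delta_4$ or from $\Delta_2$ to $\Delta_3$; Hall's condition requires all of these to be absent from $\tg_1$. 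To rule out type (a), apply \claref{clm:sse} to $\Delta_2\setminus\Gamma_{bad}$ and $\Delta_3\setminus\Gamma_{bad}$: their parallel-$\tg_1$-neighborhoods must lie inside $\Delta_4$ and $\Delta_1$ respectively, so the expansion factor $\egconst\log n$ combined with $|\Gamma_{bad}|=O(|\Delta|/n^{0.35})$ forces $|\Delta_2|,|\Delta_3|=O(|\Delta|/\log n)$. In particular $|\Delta_1|+|\Delta_4|\geq(1-o(1))|\Delta|$, and Hall's $|S|>|T|$ then forces $|\tS|-|\tT|\geq(1-o(1))|\Delta|$.

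To rule out type (b), I invoke \obsref{obs:crossedges} on the $k$-regular graph $G$ and absorb the $O(kn^{0.01})$ correction to pass from $G$ to $\tg$, yielding $e_{\tg}(\tS,\tT^c)\geq k(|\tS|-|\tT|)-O(kn^{0.01})$. Subtract the cross edges incident to $\Delta_1\cup\Delta_4$: by \lemref{lem:structure} every non-$\Gamma$ vertex has cross degree at most $n^{-1/20}k$, while Remark~\ref{rmk:smallgamma} gives $|\Gamma|\leq 10\,n^{19/20}$, so this loss is $O(n^{19/20}k)=o(k|\Delta|)$. The resulting $\Omega(k|\Delta|)$ eligible $S\to T^c$ cross edges of $\tg$ are each absent from $\tg_1$ independently with probability $1-p_1$, so $\Pr[(S,T)\text{ is Hall}]\leq (1-p_1)^{\Omega(k|\Delta|)}=n^{-\omega(|\Delta|)}$, comfortably beating the $\binom{2n}{|\Delta|}\leq n^{|\Delta|}$ union-bound factor whenever $|\Delta|\geq\log n/\log\log n$ (Case (i)). In Case (ii) with small $|\Delta|$ the count is supplemented by the $\geq n^{-1/20}k$ cross edges in $\tg$ guaranteed by any $x\in\Delta\cap\Gamma$ via the definition of $\Gamma$, which alone gives the same probability estimate.

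The main obstacle I expect is the careful bookkeeping of error terms---the $V(M)$ corrections to the near-regularity bound on $e_{\tg}(\tS,\tT^c)$, the $\Gamma_{bad}$ corrections to the expansion hypothesis, and the possible overcounting between parallel and cross contributions---so that the $\Omega(k|\Delta|)$ lower bound on $S\to T^c$ cross edges in $\tg$ really holds uniformly throughout the claimed range. The tightest regime is when $|\Delta|$ is close to $\log n/\log\log n$ and $\Delta\cap\Gamma=\emptyset$; there $k n^{0.01}$ threatens to dominate $k|\Delta|$, so squeezing out the cross-edge count may require combining the parallel-edge count from \claref{clm:sse} with the near-regularity estimate rather than using either alone.
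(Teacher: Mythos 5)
Your skeleton matches the paper's (two\-/stage exposure, the expansion claim \claref{clm:sse} to force the ``added'' parts $\Delta_2,\Delta_3$ to have size $O(|\Delta|/\log n)$, then regularity to get $e_{\tg}(\tS,\tT^c)\geq(1-o(1))|\Delta|k$, then a union bound over the surviving outgoing cross edges), but the step where you subtract the cross edges incident to the shifted vertices is wrong, and it is exactly the delicate point of this claim. You assert that the loss from cross edges incident to $\Delta_1\cup\Delta_4$ is $O(n^{19/20}k)=o(k|\Delta|)$. This fails numerically ($|\Delta|\leq n^{0.9}<n^{19/20}$, so $n^{19/20}k$ is not even $O(k|\Delta|)$) and, more importantly, conceptually: a shifted vertex lying in $\Gamma$ may have cross degree as large as $(1+\eps)\frac{k}{2}$ (Lemma \ref{lem:structure}(a)), so if most of $\Delta$ lies in $\Gamma$ the surviving outgoing edge count is only about $\left(\frac12-\eps\right)|\Delta|k$, not $(1-o(1))|\Delta|k$. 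That yields a probability of roughly $n^{-(1/2-2\eps)|\Delta|}$, which does \emph{not} beat your union\-/bound factor $\binom{2n}{|\Delta|}\approx n^{|\Delta|}$. The paper's fix is to split $\Delta$ into $\Delta\cap\Gamma$ and $\Delta\setminus\Gamma$ and union\-/bound over subsets of $\Gamma$ separately, using the hypothesis $|\Delta|\geq\frac{n^{-1/20}}{10}|\Gamma|$ (equivalently $|\Gamma|\leq 10n^{1/20}|\Delta|$) so that $\binom{|\Gamma|}{|\Delta\cap\Gamma|}\cdot n^{-(1/2-2\eps)|\Delta\cap\Gamma|}$ is small. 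You never use this lower bound on $|\Delta|$, which is the tell that the $\Gamma$-part of the union bound has not been handled.

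Two further points. Your case (ii) supplement --- that a vertex of $\Delta\cap\Gamma$ contributes $n^{-1/20}k$ extra cross edges which ``alone give the same probability estimate'' --- does not work: $(1-p_1)^{n^{-1/20}k}=\exp\!\left(-(1-o(1))n^{-1/20}\log n\right)=1-o(1)$, so those edges are worthless in a union bound; the correct accounting is again the $\left(\frac12-\eps\right)k$ surviving edges per $\Gamma$-vertex combined with $\binom{|\Gamma|}{|\Delta\cap\Gamma|}$. Finally, the $V(M)$ correction you worry about in the tight regime is not actually a problem: since $V(M)\cap\Gamma=\emptyset$ (Claim \ref{clm:N(Vlow)}), each vertex of $V(M)$ meets at most $n^{-1/20}k$ cross edges, so Claim \ref{clm:crossremoved} gives a loss of $o(n^{0.99})=o(k)$, which is $o(k|\Delta|)$ for every $|\Delta|\geq1$; your crude bound $O(kn^{0.01})$ is simply the wrong estimate to use there.
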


\begin{proof}
	Let $(S,T)$ be the cut corresponding to $\tDelta$. Set:
	\[
	a = \tS \setminus S, b = \tS^c \setminus S^c, c = \tT^c \setminus T^c, d = \tT \setminus T.
	\]
	
	We note that $|\Gamma_{\mathrm{bad}}| = o \left( |\tDelta| / \log n \right)$. Indeed, by the previous claim $|\Gamma_{\mathrm{bad}}| \leq |\Gamma| / n^{0.4}$. Now, if $|V(M)| \leq \frac{n^{-1/20}}{20} |\Gamma|$ then $|\tDelta| = \Omega (n^{-1/20} |\Gamma|)$ and the conclusion follows. Otherwise, $|\Gamma| = O \left( n^{1/20} |V(M)| \right)$. By Lemma \ref{lem:large_dist} $|V(M)| \leq 2|V_{\mathrm{low}}| = O \left( n^{0.01} \right)$. Therefore
	\[
	|\Gamma_{\mathrm{bad}}| \leq \frac{|\Gamma|}{n^{0.4}} \leq \frac{n^{1/20} |V(M)|}{n^{0.4}} = O \left( n^{-0.34} \right) = \oone.
	\]
	Thus, in fact, $\Gamma_{\mathrm{bad}} = \emptyset$, and so $|\Gamma_{\mathrm{bad}}| = o \left( |\tDelta| / \log n \right)$.
	
	We also observe that these calculations imply
	\begin{equation}\label{eq:gamma over delta}
	\frac{|\Gamma|}{|\tDelta|} = O \left( n^{0.06} \right).
	\end{equation}
	
	We now observe that if either $N_{\tg_1}^{\mathrm{Par}} (b) \nsubseteq c$ or $N_{\tg_1}^{\mathrm{Par}} (d) \nsubseteq a$, then $(S,T)$ is not a Hall cut. Thus we may assume that $N_{\tg_1}^{\mathrm{Par}} (b) \subseteq c$ and $N_{\tg_1}^{\mathrm{Par}} (d) \subseteq a$. The conclusion of Claim \ref{clm:sse} then implies that
	\[
	|a| \geq |d \setminus \Gamma_{\mathrm{bad}}| \egconst \log n, |c| \geq |b \setminus \Gamma_{\mathrm{bad}}| \egconst \log n.
	\]
	Since $\left| \Gamma_{\mathrm{bad}} \right| = o \left( \left| \tDelta \right| / \log n \right)$:
	\begin{align*}
	|b| + |d| & = |b \setminus \Gamma_{\mathrm{bad}}| + |b \cap \Gamma_{\mathrm{bad}}| + |d \setminus \Gamma_{\mathrm{bad}}| + |d \cap \Gamma_{\mathrm{bad}}|\\
	& \leq O \left( \frac{|a| + |c|}{\log n} \right) + |\Gamma_{\mathrm{bad}}| = O \left( \frac{|\tDelta|}{\log n} \right).
	\end{align*}
	We may assume that $|S|>|T|$, for otherwise $(S,T)$ is not a Hall cut by definition. It also holds that:
	\begin{align*}
	|S'| - |T'| & = |\tS| - |\tT| = \left( |S| + |a| - |b| \right) - \left( |T| + |d| - |c| \right)\\
	& > |a| + |c| - \left( |b| + |d| \right) = \left( 1 - O \left( \frac{1}{\log n} \right) \right)|\tDelta|.
	\end{align*}
	Since $G$ is $k$-regular,
	\[
	e_G(\tS,\tT^c) \geq \left( 1 - O \left( \frac{1}{\log n} \right) \right)|\tDelta| k.
	\]
   
    By Claim \ref{clm:crossremoved} the number of cross edges in $G$ that are not cross edges in $\tg$ is at most $n^{0.99}$. Thus:
    \[
	e_{\tilde{G}}(\tS,\tT^c) \geq \left( 1 - O \left( \frac{1}{\log n} \right) \right)|\tDelta| k.
	\]
    Set $\Delta_1 = \tDelta \cap \Gamma, \Delta_2 = \tDelta \setminus \Gamma$. We then have:
    \begin{align*}
    \left| E_{\tg} (S,T^c) \cap E_{\tg} (\tS, \tT^c) \right| & \geq e_{\tilde{G}} (\tS,\tT^c) - |\Delta_1|(1+\eps)\frac{k}{2} - |\Delta_2|\frac{k}{n^{1/20}}\\
    & \geq \left( \frac{1}{2} - \eps \right)|\Delta_1|k + \left( 1 - O \left( \frac{1}{\log n} \right) \right) |\Delta_2|k.
    \end{align*}
    Therefore, the probability that none of the cross edges is in $\tgone$ is at most:
    \[
    (1 - p_1)^{\left( \frac{1}{2} - \eps \right)|\Delta_1|k} (1-p_1)^{\left( 1 - O \left( \frac{1}{\log n} \right) \right) |\Delta_2|k}.
    \]
    Suppose \ref{itm:large delta} holds. Let $m = \max \left\{ \frac{n^{-1/20}}{10} |\Gamma| - |V(M)|, \log n / \log\log n \right\}$. Then, applying a union bound over all choices of $\Delta_1 \subseteq \Gamma$ and $\Delta_2$:
    \begin{align*}
    \alpha & \coloneqq 
    \sum_{\substack{|\tDelta| \in \{m, \ldots, n^{0.9}\} \\|\Delta_1|+|\Delta_2| = |\tDelta|}}\binom{|\Gamma|}{|\Delta_1|} \binom{2n}{|\Delta_2|}(1 - p_1)^{\left( \frac{1}{2} - \eps \right)|\Delta_1|k} (1-p_1)^{\left( 1 - O \left( \frac{1}{\log n} \right) \right) |\Delta_2|k}\\
    & \leq \sum_{\substack{|\tDelta| \in \{m, \ldots, n^{0.9}\} \\|\Delta_1|+|\Delta_2| = |\tDelta|}}|\Delta_1|^{-|\Delta_1|} |\Delta_2|^{-|\Delta_2|} \left( e |\Gamma| \left( \frac{1}{n} \right)^{1/2 - 2 \eps} \right)^{|\Delta_1|} \left( O( \log \log \log n) \right)^{|\Delta_2|}.
    \end{align*}
    Now
    \[
    |\Delta_1|^{-|\Delta_1|} |\Delta_2|^{-|\Delta_2|} \leq \left( \frac{2}{|\tDelta|} \right)^{|\tDelta|} = \left( \frac{2}{|\tDelta|} \right)^{|\Delta_1|} \left( \frac{2}{|\tDelta|} \right)^{|\Delta_2|}.
    \]
    Thus:
    \begin{align*}
    \alpha & \leq \sum_{\substack{|\tDelta| \in \{m, \ldots, n^{0.9}\} \\|\Delta_1|+|\Delta_2| = |\tDelta|}} \left( \frac{2 e|\Gamma|}{|\tDelta|} \left( \frac{1}{n} \right)^{1/2 - 2 \eps} \right)^{|\Delta_1|} \left( O \left( \frac{\log \log \log n}{|\tDelta|} \right) \right)^{|\Delta_2|}\\
    & \stackrel{\eqref{eq:gamma over delta}}{\leq} \sum_{\substack{|\tDelta| \in \{m, \ldots, n^{0.9}\} \\|\Delta_1|+|\Delta_2| = |\tDelta|}} \left( \frac{1}{n^{2/5}} \right)^{|\Delta_1|} \left( O \lt(\frac{\log|\tDelta|}{|\tDelta|}\rt) \right)^{|\Delta_2|}
    \leq n^{0.9} \left( \tilde{O} \left( \frac{1}{\log n} \right) \right)^{\log n / \log\log n}\\
    & \leq n^{0.9} \frac{1}{n^{1- \oone}} = O \left( \frac{1}{n^{0.05}} \right).
    \end{align*}
    Otherwise, \ref{itm:small delta} holds. Then $\left| \Delta_1 \right| \geq 1$. By a similar application of a union bound, the probability that there exists any Hall cut $(S,T)$ in $\tgone$ satisfying the hypothesis is bounded above by:
    \[
   \sum_{\substack{|\tDelta| \in [\log n / \log \log n] \\|\Delta_1|+|\Delta_2| = |\tDelta|}} \left( \frac{1}{n^{2/5}} \right)^{\left|\Delta_1\right|} \left( O \lt(\frac{\log|\tDelta|}{|\tDelta|}\rt) \right)^{|\Delta_2|}
    = \tilde{O} \left( \frac{1}{n^{2/5}} \right).
    \qedhere
    \]
\end{proof}

It remains to show that w.v.h.p.\ there is no Hall cut with $\left| \tDelta \right| \leq \log n / \log\log n$ and $\tDelta \cap \Gamma = \emptyset$.

\begin{claim}\label{clm:x5}
	W.v.h.p.\ there exists no Hall cut $(S,T)$ in $\tGH$ with $|\tDelta|< \log n / \log\log n$ and $\tDelta \cap \Gamma = \emptyset$.
\end{claim}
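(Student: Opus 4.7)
The plan is to union-bound, over $\Delta$ satisfying the hypothesis, the probability that the corresponding cut $(S,T)$ is a Hall cut in $\tGH$. As in the proof of Claim \ref{clm:win_by_gamma}, I decompose $\Delta = a \cup b \cup c \cup d$. The first step is to show that expansion forces $b = d = \emptyset$: if $(S,T)$ is a Hall cut, then $N^{\text{Par}}_{\tgone}(d) \subseteq a$ and $N^{\text{Par}}_{\tgone}(b) \subseteq c$, so Claim \ref{clm:sse} (applicable since $b \cup d \subseteq V \setminus V(M)$ avoids $\Gamma_{bad}$ and has size at most $|\Delta| < n^{0.9}$) gives $|a| + |c| \geq (|b|+|d|)\egconst \log n$. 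Combined with $|\Delta| < \log n/\log\log n$ this yields $|b|+|d| < 1$ for $n$ large, hence $b = d = \emptyset$.

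The crucial observation is a parity identity: since $M$ consists of parallel edges w.r.t.\ $(S',T')$, each matching edge pairs one vertex of $S'$ with one of $T'$, or one of $S'^c$ with one of $T'^c$. Hence $|V(M) \cap S'| = |V(M) \cap T'|$, and therefore $|\tS|-|\tT| = |S'|-|T'|$. Combining this with $b = d = \emptyset$ and the Hall condition $|S|>|T|$ gives $|S'|-|T'| = |\tS|-|\tT| \geq |\Delta|+1$. Observation \ref{obs:crossedges} applied to $(S',T')$ then yields $e_G(S',T'^c) \geq k(|\Delta|+1)$. Using $k \gg n^{0.99}$ together with Claim \ref{clm:crossremoved}, this lower bound transfers to
\[
e_{\tg}(\tS, \tT^c) \;\geq\; e_G(S',T'^c) - o(n^{0.99}) \;\geq\; (1-o(1))\, k(|\Delta|+1).
\]
Subtracting the at most $2|\Delta| n^{-1/20} k$ cross edges incident to $a \cup c \subseteq V \setminus \Gamma$ leaves $e_{\tg}(\tS \setminus a, \tT^c \setminus c) \geq (1-o(1))\, k(|\Delta|+1)$.

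For $(S,T)$ to be Hall in $\tGH$, all these edges (both endpoints in $V \setminus V(M)$) must be absent from $\tgone$, equivalently from $G_1$. The probability is at most $(1-p_1)^{(1-o(1))k(|\Delta|+1)} \leq n^{-(|\Delta|+1)(1-o(1))}$. Using Vandermonde's identity to bound the number of choices for $\Delta = a \cup c$ of size $\delta$ by $\binom{2n}{\delta}$, the union bound yields
\[
\sum_{\delta = 0}^{\lfloor \log n/\log\log n \rfloor} \binom{2n}{\delta} \, n^{-(\delta+1)(1-o(1))} \;\leq\; n^{-1+o(1)} \sum_{\delta \geq 0} \frac{2^\delta}{\delta!} \;=\; O(n^{-1+o(1)}),
\]
which is polynomially small, as required.

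The main obstacle is establishing the parity identity $|\tS|-|\tT| = |S'|-|T'|$, which supplies the decisive ``$+1$'' in $|S'|-|T'| \geq |\Delta|+1$; without this surplus, each summand would only be $O(n^{o(1)})$ rather than $O(n^{-1+o(1)})$, and the union bound would fail for small $\delta$.
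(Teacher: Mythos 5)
Your proof is correct, and for the core counting step it takes a genuinely different route from the paper's. Both arguments open the same way: Claim \ref{clm:sse} applied to $b$ and $d$ forces $b=d=\emptyset$ once $|\Delta|<\log n/\log\log n$ and $\Delta\cap\Gamma=\emptyset$. From there the paper avoids a union bound entirely: it dismisses the case $|\tS|\le|\tT|$ outright (then $|S|-|T|\le|\tS|-|\tT|\le 0$), and for $|\tS|>|\tT|$ it shows by a single Chernoff bound that w.v.h.p.\ $e_{\tgone}(\tS,\tT^c)=\Omega(\log n)$, while Lemma \ref{lem:low cross degree} caps the $G_1$-cross-degree of each vertex of $\Delta\subseteq V\setminus\Gamma$ at $30$, so $\Delta$ is too small to be a vertex cover of these edges; this handles all admissible $\Delta$ simultaneously. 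You instead extract from the Hall condition the quantitative surplus $|\tS|-|\tT|\ge|\Delta|+1$, obtain $(1-o(1))k(|\Delta|+1)$ outgoing edges, and union-bound with per-cut probability $n^{-(1-o(1))(|\Delta|+1)}$ against $\binom{2n}{|\Delta|}$ choices. Your parity observation $|V(M)\cap S'|=|V(M)\cap T'|$, giving $|\tS|-|\tT|=|S'|-|T'|$, is exactly what lets the regularity bound $k(|S'|-|T'|)\le e_G(S',T'^c)$ transfer to $\tg$ while losing only the $o(n^{0.99})$ cross edges of Claim \ref{clm:crossremoved} (a naive application of Observation \ref{obs:crossedges} to $(\tS,\tT)$ would also lose parallel edges into $V(M)\cap T'$, which are not controlled); the paper performs this transfer tersely and implicitly relies on the same fact, so making it explicit is a genuine gain in rigor. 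The trade-offs: the paper needs Lemma \ref{lem:low cross degree} and a Chernoff bound but no union bound; you need neither, at the cost of checking (as you do, implicitly) that the $o(1)$ losses in the exponent are $o(\log\log n/\log n)$ so that $\binom{2n}{\delta}n^{-(1-o(1))(\delta+1)}=\frac{2^{\delta}}{\delta!}n^{-1+o(1)}$ uniformly for $\delta\le\log n/\log\log n$. One point left implicit (as it is in Claim \ref{clm:win_by_gamma}): the per-$\Delta$ probability must be computed conditionally on the parallel edges, which determine $M$, $\tS$, $\tT$ and the event of Claim \ref{clm:sse}; the two-stage exposure described before Observation \ref{obs:miracle} justifies this.
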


\begin{proof}
	We first show that if $\left| \tS \right| \leq \left| \tT \right|$ then w.v.h.p.\ there is no Hall cut satisfying the claim's hypothesis. Suppose that $\tDelta$ corresponds to a Hall cut $(S,T)$ and $\tDelta \cap \Gamma = \emptyset$. We will show that $|\tDelta| = \Omega (\log n)$.
	
	Recall the definition of $a,b,c,$ and $d$ from the previous proof. It holds that:
	\begin{align*}
	\left|S\right| - \left|T\right| = \left|\tS\right| - \left|\tT\right| - |a| + |b| - |c| + |d| \implies\\
	|b|+|d| = |S|-|T| + |\tT| - |\tS| + |a| + |c| > 0.
	\end{align*}	
	Since $\tDelta \cap \Gamma = \emptyset$, we have $b,d \subseteq V \setminus \left( V(M) \cup \Gamma \right)$. Since $(S,T)$ is a Hall cut, for every $x \in b \cup d$, $N_{\tg_1}(x) \subseteq \Delta$. However, by Claim \ref{clm:sse}, w.v.h.p.\ for every such $x$, $\left| N_{\tg_1}(x) \right| = \Omega \left( \log n \right)$. Therefore $|\tDelta| = \Omega (\log n)$, as claimed.
	
	We now assume that $\left| \tS \right| - \left| \tT \right| > 0$.	We will show presently that w.v.h.p.\ $e_{\tg_1} \left( \tS , \tT^c \right) = \Omega \left( \log n \right)$. Suppose $(S,T)$ is a cut satisfying the claim's hypothesis. Then $\tDelta$ must contain a vertex cover of $E_{\tg_1} \left( \tS , \tT^c \right)$. However, since $\tDelta \cap \Gamma = \emptyset$, by Lemma \ref{lem:low cross degree} w.v.h.p.\ each vertex in $\tDelta$ is incident to at most $30$ cross edges in $G_1$. Since $\left| \tDelta \right| < \log n / \log\log n$, $\tDelta$ does not contain a vertex cover of $E_{\tg_1} \left( \tS , \tT^c \right)$, and so $(S,T)$ is not a Hall cut.
	
	Finally, we show that w.v.h.p.\ $e_{\tg_1} \left( \tS , \tT^c \right) = \Omega \left( \log n \right)$. Since $G$ is $k$-regular, we have $e_G \left( \tS , \tT \right) \geq k \left( \left| \tS \right| - \left| \tT \right| \right) \geq k$. By Claim \ref{clm:crossremoved}, the number of cross edges of $G$ incident to $V(M)$ is $o \left( n^{0.99} \right)$, so $\tg$ has at least $C = (1-o(1))k$ cross edges. Now, $e_{\tgone} \left( \tS , \tT \right) \sim Bin(C, p_1)$. By an application of Chernoff's inequality, w.v.h.p.\ $e_{\tgone} \left( \tS , \tT \right) \geq \frac{1}{2} C p_1 = \Omega \left( \log n \right)$.
\end{proof}

\appendix

\section{Proof of Proposition \ref{prop:counterexample}}\label{app:proof_counterexample}

It may be intuitive to think at first - as all three of us did - that the conclusion of \thmref{thm:main} holds for all large regular bipartite graphs, i.e., the requirement $k = \omega \left( \frac{n}{\log^{1/3} n} \right)$ is not necessary. In this section, we analyze a construction of Goel, Kapralov, and Khanna \cite{goel2010perfect} to show that this is not true, and indeed for small values of $k$, $G(p)$ might not contain a perfect matching even for relatively large $p$.

The intuition for all of our counterexamples comes from the following simple construction. 

\begin{definition}
	A \termdefine{$k$-resistor} between two vertices $x$ and $y$ is the following bipartite graph: The vertex set is $\{x,y\} \dcup X' \dcup Y'$, where $X'$ and $Y'$ have cardinality $k$. Let $x' \in X', y' \in Y'$ be ``special'' vertices. The edge set is:
	\[
	\left\{ xx', yy' \right\} \cup \left(\left\{ ab : a \in X', b \in Y' \right\} \setminus \left\{ x'y' \right\}\right).
	\]
	In other words, starting from the complete bipartite graph on $X'$ and $Y'$, the edge $x'y'$ is removed, and the edges $xx'$ and $yy'$ are added.
\end{definition}

Notice that of the $2k+2$ vertices of a $k$-resistor between $x$ and $y$, all but $x$ and $y$ have degree $k$. Furthermore, if a spanning subgraph of the resistor contains a perfect matching, both edges $xx'$ and $yy'$ are present. This leads to the following construction. 

\begin{prop}\label{prop:1resistor}
	Construct a $k$-regular, $n = (2k^2+2)$-vertex bipartite graph $G$ as follows. Let $x$ and $y$ be two initial vertices. Add $k$ distinct $k$-resistors between $x$ and $y$. Then, a.a.s.\  the random subgraph $G(p)$ does not contain a perfect matching for any $p = o \left( n^{-1/4} \right)$. On the other hand, a.a.s.\ $G(p)$ contains no isolated vertices for any $p = \omega \left( \log n / \sqrt{n} \right)$.
\end{prop}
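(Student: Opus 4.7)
My plan exploits the fact that the vertices $x$ and $y$ are the only ``bridges'' between the resistors. Label the resistors $R_1, \ldots, R_k$, with internal bipartition $X_i' \dcup Y_i'$ and distinguished vertices $x_i' \in X_i'$, $y_i' \in Y_i'$; by definition the only edges of $G$ leaving $X_i' \cup Y_i'$ are $xx_i'$ and $yy_i'$. Note also that $n = 2k^2 + 2$, so in particular $k = \Theta(\sqrt{n})$.

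The key structural step is to show that any perfect matching $M$ of a spanning subgraph of $G$ must contain both edges $xx_j'$ and $yy_j'$ for some common index $j$. Indeed, $x$ is adjacent only to the $x_i'$'s, so $M$ matches it to some $x_j'$. The remaining $k-1$ vertices of $X_j'$ have neighbors only in $Y_j'$, so $M$ matches them to $k-1$ vertices of $Y_j'$. The unique leftover vertex of $Y_j'$ must then be matched externally, and its only external neighbor is $y$, forcing $yy_j' \in M$. Consequently, the event ``$G(p)$ contains a perfect matching'' is contained in the event ``some resistor $R_j$ has both external edges $xx_j'$ and $yy_j'$ in $G(p)$''.

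The remaining arguments are routine first-moment estimates. Each resistor contains both of its external edges with probability $p^2$, so a union bound over the $k$ resistors bounds the probability of a perfect matching by $kp^2 = \Theta(\sqrt{n}) \cdot o(n^{-1/2}) = o(1)$ whenever $p = o(n^{-1/4})$. For isolated vertices, every vertex of $G$ has degree $k$, so the expected number of isolated vertices in $G(p)$ is at most $n(1-p)^k \leq n\exp(-pk)$; with $pk = \omega(\log n)$ in the regime $p = \omega(\log n / \sqrt{n})$, this expectation is $o(1)$ and Markov's inequality finishes the proof.

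The only nontrivial step is the parity argument in the second paragraph; the surrounding computations are entirely standard. No sophisticated tools (regularity, discrepancy, etc.) are needed, because the construction is specifically designed so that the matching obstruction is concentrated at the pinch vertices $x,y$, and the probabilistic gap between $p = o(n^{-1/4})$ and $p = \omega(\log n / \sqrt{n})$ is delivered directly by the quadratic-versus-linear dependence on $p$ in the two computations above.
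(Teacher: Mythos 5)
Your proposal is correct and follows essentially the same route as the paper: the structural observation that a perfect matching forces both pendant edges $xx_j'$ and $yy_j'$ of some common resistor, followed by a first-moment bound of $kp^2 = o(1)$ for matchings and $n(1-p)^k = o(1)$ for isolated vertices. Your second paragraph merely spells out in more detail the forcing argument that the paper states as an observation immediately after the definition of a $k$-resistor.
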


\begin{proof}
	Both conclusions follow from the first moment method.
	
	Let $H \sim G(p)$. Note that $H$ contains a perfect matching only if for one of the resistors, both edges $xx'$ and $yy'$ are present. This occurs with probability $p^2$. As there are $k$ different resistors, and they are all edge-disjoint, the expected number of such pairs is $kp^2$. Since $k = \Theta(\sqrt{n})$, if $p = o \left( n^{-1/4} \right)$, a.a.s.\ there is no such pair in $H$.
	
	The expected number of isolated vertices in $H$ is $n(1-p)^k \leq  \exp \left( \log n - pk \right)$. When $p = \omega \left( \log n  / \sqrt{n} \right)$ this tends to zero, and a.a.s.\ there are no isolated vertices.
\end{proof}

In this example we had $k=\Theta(\sqrt{n})$, leaving a large gap between it and the range $k=\Theta(n)$ in \thmref{thm:main}. We reduce this gap as follows.

\begin{definition}\label{def:series}
	A \termdefine{$(k,\ell,r)$-series of resistors} between two vertices $x$ and $y$ is constructed as follows. Let $K_1,K_2,\ldots,K_\ell$ be $\ell$ copies of the complete bipartite graph $K_{k,k}$, with respective vertex sets $X_1 \dcup Y_1, X_2\dcup Y_2,\ldots,X_\ell\dcup Y_\ell$. For each $1\leq i \leq \ell$, let $x_i^1,x_i^2,\ldots,x_i^r \in X_i, y_i^1,y_i^2,\ldots,y_i^r \in Y_i$ be distinct. Remove all edges of the form $x_i^jy_i^j$, and add all edges of the form $y_i^j x_{i+1}^j$, as well as $xx_1^j,y_\ell^jy$.
\end{definition}

The following proposition uses a construction similar to the one in Proposition \ref{prop:1resistor}.

\begin{prop}\label{prop:ellresistor}
	For $n= 2+ 20 k \log k \log \log k$, construct a $k$-regular $n$-vertex bipartite graph $G$ as follows. Starting with two vertices $x$ and $y$, add $\log k$ distinct  $\left( k, 10 \log \log k, \frac{k}{\log k} \right)$-series of resistors between $x$ and $y$. A.a.s.\ the random subgraph $G(p)$ does not contain a perfect matching for any $p \leq 2 \log n / k$. On the other hand, $p = \left( \log n + \omegaone \right) / k$ suffices for $G(p)$ to contain no isolated vertices a.a.s.
\end{prop}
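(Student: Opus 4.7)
The second conclusion is a routine first-moment calculation: the expected number of isolated vertices in $G(p)$ is $n(1-p)^k \leq \exp(\log n - pk) = \exp(-\omegaone) = \oone$, and Markov's inequality finishes.

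For the first conclusion, the plan is to extract a simple necessary condition for $G(p)$ to contain a perfect matching and bound its probability directly. Within a single series, I would call the $r$ edges $xx_1^j$ the \emph{level-$0$ edges}, the $r$ edges $y_i^j x_{i+1}^j$ the \emph{level-$i$ edges} for $1 \leq i \leq \ell - 1$, and the $r$ edges $y_\ell^j y$ the \emph{level-$\ell$ edges}; within each series these form $\ell+1$ disjoint $r$-element sets, and distinct series share no edges at all. The structural claim I would establish is: in any perfect matching $M$ of $G$ there is a unique series $s^*$ in which $M$ uses exactly one edge from each of the $\ell+1$ levels, and no level edge of any other series lies in $M$. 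To prove this, for each copy $K_i$ of $K_{k,k}$ in a fixed series let $a_i$ and $b_i$ denote the number of $X_i$- and $Y_i$-vertices matched by $M$ outside $K_i$. The identity $|X_i|=|Y_i|$ forces $a_i=b_i$, and the only external edges incident to $X_i$ and $Y_i$ are the level-$(i-1)$ and level-$i$ edges respectively, so the number $e_i$ of level-$i$ edges used in that series satisfies $e_0 = a_1 = b_1 = e_1 = a_2 = \cdots = e_\ell$. Since $x$ and $y$ are each matched exactly once overall, this common value equals $1$ in exactly one series and $0$ in every other. This structural step is the main obstacle; the remaining probabilistic argument is routine.

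Given the structural claim, a necessary condition for $G(p)$ to contain a perfect matching is that some series $s$ has at least one level edge present in $G(p)$ at each of its $\ell + 1$ levels. The $r(\ell+1)$ level edges of a single series are pairwise distinct, so by independence this probability equals $\bigl(1-(1-p)^r\bigr)^{\ell+1}$. Plugging in $p = 2\log n/k$ and $r = k/\log k$ gives $rp = 2\log n/\log k = 2 + \oone$ (since $\log n = \log k + O(\log\log k)$), hence $(1-p)^r = e^{-2+\oone}$. A union bound over the $\log k$ series then yields
\[
\Prob[G(p) \text{ contains a perfect matching}] \leq \log k \cdot \bigl(1 - e^{-2} + \oone\bigr)^{10 \log\log k + 1},
\]
and since $10\log(1-e^{-2}) \approx -1.454 < -1$, the right-hand side is $(\log k)^{-0.454 + \oone} = \oone$, as required. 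The only subtlety beyond the structural claim is checking that the constant $10$ in $\ell = 10 \log\log k$ is large enough to beat the $\log k$-fold union bound over series, which amounts to the inequality $10 \log(1-e^{-2}) < -1$.
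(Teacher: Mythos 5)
Your proposal is correct and follows essentially the same route as the paper: the same necessary condition (some series must have at least one surviving edge at each of its $\ell+1$ levels), the same union bound over the $\log k$ series giving $\log k\cdot\bigl(1-(1-p)^r\bigr)^{\ell+1}=\oone$, and the same first-moment argument for isolated vertices. Your counting argument $e_0=a_1=b_1=\cdots=e_\ell$ is a clean justification of the structural step that the paper merely asserts.
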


\begin{proof}
	For consistency with Definition \ref{def:series}, let $\ell = 10 \log \log k$ and $r = k/\log k$. For a spanning subgraph $G'\subseteq G$ to contain a perfect matching, there must be at least one series of resistors containing at least one edge of the form $x x_1^j$, at least one edge of the form $y_\ell^j y$, and one edge of the form $y_i^j x_{i+1}^j$ for each $i$ between $1$ and $\ell -1$. Therefore, applying the union bound over all $k/r$ choices of the $(k,\ell,r)$-series, we obtain
	\begin{align*}
	\Prob \lt[ G(p) \mbox{ contains a perfect matching} \rt] \leq 
	\frac{k}{r}\lt[1-(1-p)^r\rt]^{\ell +1}.
	\end{align*}
	Let $p = 2 \log n / k$. Then $(1-p)^r \sim e^{-2}$, and therefore
	\begin{align*}
	\prob \lt[ G(p) \mbox{ contains a perfect matching} \rt] \leq \log k \lt( 1 - e^{-2} \rt)^{10 \log \log k} = \oone.
	\end{align*}
	
	The statement about isolated vertices follows from an argument similar to the one in the proof of Proposition \ref{prop:1resistor}.
\end{proof}

\bibliography{pm_in_gp}
\bibliographystyle{amsplain}

\end{document}